\DeclareFontFamily{U}{euf}{}
\DeclareFontShape{U}{euf}{m}{n}{%
	<5><6><7><8><9>gen*eufm%
	<10><10.95><12><14.4><17.28><20.74><24.88>eufm10%
}{}
\DeclareFontShape{U}{euf}{b}{n}{%
	<5><6><7><8><9>gen*eufb%
	<10><10.95><12><14.4><17.28><20.74><24.88>eufb10%
}{}
\DeclareFontFamily{U}{msb}{}
\DeclareFontShape{U}{msb}{m}{n}{%
	<5><6><7><8><9>gen*msbm%
	<10><10.95><12><14.4><17.28><20.74><24.88>msbm10%
}{}
\DeclareFontFamily{U}{msa}{}
\DeclareFontShape{U}{msa}{m}{n}{%
	<5><6><7><8><9>gen*msam%
	<10><10.95><12><14.4><17.28><20.74><24.88>msam10%
}{}
\newtheorem{theorem}{Theorem}[section]
\newtheorem{lemma}[theorem]{Lemma}
\newtheorem{proposition}[theorem]{Proposition}
\theoremstyle{definition}
\newtheorem{definition}[theorem]{Definition}
\newtheorem{Example}[theorem]{Example}
\numberwithin{equation}{section} \frenchspacing
\begin{document}
	
	\def\Re{{\rm Re}}
	\def\Im{{\rm Im}}
	\def\Li{{\rm Li}}

	\title[On the properties of alternating invariant functions]
	{On the properties of alternating invariant functions}

	\begin{abstract}
		Functions satisfying the functional equation  
		\begin{align*}
			\sum_{r=0}^{n-1} (-1)^r f(x+ry, ny) = f(x,y), \quad \text{for any positive odd integer $n$},
					\end{align*} 
		are named the alternating invariant functions. 
		Examples of such functions include Euler polynomials, alternating Hurwitz zeta functions and their associated Gamma functions. 
		In this paper, we systematically investigate the fundamental properties of alternating invariant functions. We prove that the set of such functions is closed under translation, reflection, and differentiation. In addition, we define a convolution operation on alternating invariant functions and derive explicit convolution formulas for Euler polynomials and alternating Hurwitz zeta functions, respectively. Furthermore, using distributional relations, we construct new examples of alternating invariant functions, including suitable combinations of trigonometric, exponential, and logarithmic functions, among others.	\end{abstract}
	
	\author{Haiqing Zhu}
	\address{Department of Mathematics, South China University of Technology, Guangzhou, Guangdong 510640, China}
	\email{1094836815@qq.com}
	\author{Su Hu}
	\address{Department of Mathematics, South China University of Technology, Guangzhou, Guangdong 510640, China}
	\email{mahusu@scut.edu.cn}
	\author{Min-Soo Kim}
	\address{Department of Mathematics Education, Kyungnam University, Changwon, Gyeongnam 51767, Republic of Korea}
	\email{mskim@kyungnam.ac.kr}

	\subjclass[2020]{11B68, 11M06, 33B15, 33B10}
	\keywords{Bernoulli polynomial, Euler polynomial, Hurwitz zeta function, Gamma function, Alternating invariant function}

	\maketitle
	
	\vspace*{1ex}

	\section{Introduction}
	\subsection{Bernoulli polynomial and invariant functions} 
	The Bernoulli polynomials \( B_n(x) \) (\( n \in \mathbb{N}_0 = \mathbb{N} \cup \{0\} \)) are defined by the generating function
	\begin{align*}
		\frac{te^{xt}}{e^t - 1} = \sum_{k=0}^{\infty} B_k(x) \frac{t^k}{k!}.
	\end{align*}
	They possess many interesting properties, one of which is the following distribution formula (see \cite[Theorem 3.1]{2}):
	\begin{align*}
		\sum_{r=0}^{n - 1} B_m\left(x + \frac{r}{n}\right) = n^{1 - m} B_m(nx), \quad\textrm{for}~ m \in \mathbb{N}_0 \ \textrm{and}~  n \in \mathbb{N}.
	\end{align*}
	Let \( f(x, y) = y^{m - 1} B_m\left(\frac{x}{y}\right)\), based on the above equation, we introduce the following definition: 
	\begin{definition}[{See \cite[Eq. (1.2)]{33}}]\label{def:invariant function}
A real function $f(x,y)$ defined for $y>0$ is called an \textit{invariant function} if it satisfies the functional equation
    \begin{equation}\label{(1-1)}
		\begin{aligned}
			\sum_{r=0}^{n - 1} f(x + r y, n y) = f(x, y), \quad\textrm{for any}~ n \in \mathbb{N}.
		\end{aligned}
	\end{equation}
    The set of all invariant functions is denoted by $\textup{InV}$.
\end{definition}
This class of functions was defined by Z.-W. Sun in \cite{30,31,32} and Z.-H. Sun in \cite{33}.
Besides Bernoulli polynomials,  Z.-H. Sun also provided many examples of invariant functions, such as the Hurwitz zeta function and the Gamma function (see \cite[pp. 7–11]{33}):
	\begin{Example}[{See \cite[Example 2.2]{33}}]
		The Hurwitz zeta function is defined as follows (see \cite[Definition 9.6.1]{19}):
		\begin{align*}
			\zeta(s, x) = \sum_{n=0}^{\infty} \frac{1}{(n + x)^s},
		\end{align*}
		where \( \textup{Re}(s) > 1 \) and \( x \in \mathbb{C} \setminus \mathbb{Z}_{\leq 0} = \{0, -1, -2, \ldots\} \). It can be analytically continued to a meromorphic function on the complex plane with a simple pole at \( s = 1 \).  
		For \( \textup{Re}(s) < 0 \), the distribution formula of the Hurwitz zeta function  gives (see \cite[Proposition 9.6.12]{19}):
		\begin{align*}
			\sum_{0 \leq j < N} \zeta\left(s, x + \frac{j}{N}\right) = N^s \zeta(s, Nx),
		\end{align*}
		implying \( y^{-s} \zeta\left(s, \frac{x}{y}\right) \in \textup{InV} \). Similarly, for \( \textup{Re}(s) > 1 \), \( y > 0 \), and \( \frac{x}{y} \in \mathbb{C} \setminus \mathbb{Z}_{\leq 0} \), we have \( y^{-s} \zeta\left(s, \frac{x}{y}\right) \in \textup{InV} \).
	\end{Example}
	
	\begin{Example}[{See \cite[Example 2.12]{33}}]
		Euler's Gamma function is defined as follows:
		\begin{align*}
			\Gamma(x) = \int_0^{\infty} t^{x-1} e^{-t} \, dt, \quad \text{where } \textup{Re}(x) > 0.
		\end{align*}
		For \( x \in \mathbb{R} \) and \( y > 0 \), define the function \( f(x, y) \) as:
		\begin{align*}
			f(x, y) = 
			\begin{cases} 
				\log \left| \frac{y^{\frac{x}{y}} \Gamma\left(\frac{x}{y}\right)}{\sqrt{2\pi y}} \right| & \text{if } \frac{x}{y} \notin \mathbb{Z}_{\leq 0}, \\
				\log \frac{y^{\frac{x}{y}}}{\left(-\frac{x}{y}\right)! \sqrt{2\pi y}} & \text{if } \frac{x}{y} \in \mathbb{Z}_{\leq 0}.
			\end{cases}
		\end{align*}
		By the distribution formula of the Gamma function (see \cite[p. 99, Eq. (1)]{43}):
		\begin{align*}
			\prod_{0 \leq j < N} \Gamma\left(s + \frac{j}{N}\right) = N^{1/2 - Ns} (2\pi)^{(N-1)/2} \Gamma(Ns),
		\end{align*}
		we have \( f \in \textup{InV} \).
	\end{Example}
	
	%[13] Cohen H.. Number theory: Volume II: Analytic and modern Tools[M]. Graduate Texts in Mathematics, vol. 240, Springer, New York, 2007%
	
	\begin{Example}[{See \cite[Example 2.11]{33}}]  
		For \( y > 0 \), define
		\begin{align*}
			f(x, y) = 
			\begin{cases} 
				\frac{1}{y} \cot \pi \frac{x}{y} & \text{if } \frac{x}{y} \notin \mathbb{Z}, \\
				0 & \text{if } \frac{x}{y} \in \mathbb{Z}.
			\end{cases}
		\end{align*}
		From the partial fraction expansion of \( \pi \cot \pi v \) (see \cite[p. 9]{32}):
		\begin{align*}
			\pi \cot \pi v = \sum_{k=-\infty}^{+\infty} \frac{1}{v + k} = \frac{1}{v} + 2v \sum_{k=1}^{\infty} \frac{1}{v^2 - k^2},
		\end{align*}
		we obtain \( f \in \textup{InV} \).
	\end{Example}
	
	Sun defined an operation \( \ast \) on the set \( \textup{InV} \):
	\begin{equation}\begin{aligned}
			g \ast h(x, y) = \int_{0}^{x} g(t, y) h(x - t, y) \, dt + \int_{x}^{y} g(t, y) h(x + y - t, y) \, dt,
	\end{aligned}\end{equation}
	and proved that \( g \ast h(x, y) \in \textup{InV} \). In fact, he established the following theorem:
	\begin{theorem}[{See \cite[Theorem 3.4]{33}}]\label{Theorem 1.1}  
		If \( g(x, y), h(x, y) \in \textup{InV} \), then \( g \ast h(x, y) \in \textup{InV} \) and
		\begin{align*}
			\int_{0}^{y} g \ast h(x, y) \, dx = \int_{0}^{y} g(x, y) \, dx \cdot \int_{0}^{y} h(x, y) \, dx.
		\end{align*}
	\end{theorem}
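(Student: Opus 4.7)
The strategy is to reinterpret the convolution $g*h(x,y)$ as a circular convolution on the torus of circumference $y$. For an invariant function $f$ and each $y>0$, let $\widetilde{f}(\cdot,y)$ denote the $y$-periodic extension of $f(\cdot,y)$ from its values on $[0,y]$. The definition of $g*h$ then rewrites, for $x\in[0,y]$, as
\begin{equation*}
    g*h(x,y)=\int_0^y g(t,y)\,\widetilde{h}(x-t,y)\,dt,
\end{equation*}
because $\widetilde{h}(x-t,y)=h(x-t,y)$ when $t\in[0,x]$ and $\widetilde{h}(x-t,y)=h(x+y-t,y)$ when $t\in[x,y]$. The analogous formula with $y$ replaced by $ny$ represents $g*h(x,ny)$ for $x\in[0,ny]$.

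To verify invariance, I would swap sum and integral,
\begin{equation*}
    \sum_{r=0}^{n-1}g*h(x+ry,ny)=\int_0^{ny} g(t,ny)\sum_{r=0}^{n-1}\widetilde{h}(x+ry-t,ny)\,dt,
\end{equation*}
and simplify the inner sum. For fixed $t$, write $(x-t)\bmod ny=\tau_0+ky$ with $\tau_0\in[0,y)$ and $k\in\{0,\dots,n-1\}$; since $\widetilde{h}(\cdot,ny)$ is $ny$-periodic and $r\mapsto(k+r)\bmod n$ is a bijection of $\{0,\dots,n-1\}$, the inner sum collapses to $\sum_{j=0}^{n-1}h(\tau_0+jy,ny)=h(\tau_0,y)=\widetilde{h}(x-t,y)$ by invariance of $h$. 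Next I would split the outer integral via $t=s+jy$ with $s\in[0,y]$, $j\in\{0,\dots,n-1\}$; the factor $\widetilde{h}(x-(s+jy),y)$ is independent of $j$ by $y$-periodicity, so invariance of $g$ gives $\sum_{j=0}^{n-1}g(s+jy,ny)=g(s,y)$, and the whole expression reduces to $\int_0^y g(s,y)\,\widetilde{h}(x-s,y)\,ds=g*h(x,y)$.

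For the integral identity, Fubini's theorem together with the $y$-periodicity of $\widetilde{h}(\cdot,y)$ yields $\int_0^y \widetilde{h}(x-t,y)\,dx=\int_0^y h(x,y)\,dx$ for every $t$, whence $\int_0^y g*h(x,y)\,dx=\int_0^y g(t,y)\,dt\cdot\int_0^y h(x,y)\,dx$. The main obstacle is the careful bookkeeping modulo $y$ and $ny$: one must unfold the piecewise definition into the circular-convolution form and keep track of which period each argument of $h(\cdot,ny)$ lies in when the invariance identities are applied. Once this is in place, the invariance identities for $g$ and $h$ plug in at parallel steps and yield the required identity with essentially no further computation.
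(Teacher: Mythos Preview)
The paper does not itself prove Theorem~\ref{Theorem 1.1} --- it is quoted from \cite[Theorem 3.4]{33} --- but it does prove the alternating analogue, Theorem~\ref{Theorem 1.4}, and that proof is the natural benchmark. Your circular-convolution rewriting is exactly the device used there: the paper sets $h_1(x,y)=h(y-x,y)$ and works with
\[
f_1(x,y)=\int_0^y(-1)^{\left[\frac{t+x}{y}\right]}h_1\Bigl(y\Bigl\{\tfrac{t+x}{y}\Bigr\},y\Bigr)\,g(t,y)\,dt,
\]
which, with the sign dropped, is your $\int_0^y g(t,y)\,\widetilde h(x-t,y)\,dt$ in disguise. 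So the core strategy matches, and your Fubini argument for the integral identity is fine.

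There is, however, a genuine gap. Your representation of $g*h(\cdot,y)$ as a circular convolution is valid only on $[0,y]$, and likewise the representation of $g*h(\cdot,ny)$ only on $[0,ny]$; consequently your computation of $\sum_{r=0}^{n-1}g*h(x+ry,ny)$ is justified only for $x\in[0,y]$. Outside this range the defining integral
\[
g*h(x,y)=\int_0^x g(t,y)h(x-t,y)\,dt+\int_x^y g(t,y)h(x+y-t,y)\,dt
\]
need not agree with the $y$-periodic extension of its restriction to $[0,y]$, so invariance for all $x\in\mathbb{R}$ does not follow. In the paper's proof of Theorem~\ref{Theorem 1.4} this extension is a substantial separate step: after handling $0<x\le y$, one introduces $\bar f(x,y)=f(x+y,y)+f(x,y)$, proves directly that $\bar f(x,ny)=\bar f(x,y)$ via a convolution computation (Eq.~\eqref{(4-2)}--\eqref{(4-3)}), and uses this to push the identity to $x>y$ and then to $x\le 0$. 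Your proposal needs the non-alternating counterpart of that argument (with $\bar f(x,y)=f(x+y,y)-f(x,y)$); the ``careful bookkeeping modulo $y$ and $ny$'' you flag is not the missing piece --- the missing piece is this extension step, and it is not automatic.
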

	He also provided convolution formulas for Bernoulli polynomials and the Hurwitz zeta function, as follows:
	\begin{theorem}[{See \cite[Theorem 3.6]{33}}]\label{Theorem 1.2}
		For \( m, n \in \mathbb{N} \), we have
		\begin{align*}
			B_{m+n}(x) = -\binom{m+n}{m} \left( \int_{0}^{1} B_m(x - t) B_n(t) \, dt + m \int_{x}^{1} (x - t)^{m-1} B_n(t) \, dt \right)
		\end{align*}
		and
		\begin{align*}
			\frac{-y^{m-1} B_m(\frac{x}{y})}{m!} \ast \frac{-y^{n-1} B_n(\frac{x}{y})}{n!} = \frac{-y^{m+n-1} B_{m+n}(\frac{x}{y})}{(m+n)!}.
		\end{align*}
	\end{theorem}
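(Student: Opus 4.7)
My plan is to prove the second (convolution) identity first, reducing it via scaling to the case $y = 1$ and then verifying that case by a generating-function calculation. The first identity will follow from the $y = 1$ convolution by a change of variable combined with the Bernoulli recurrence $B_m(v + 1) = B_m(v) + m v^{m-1}$.

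For the scaling reduction I would substitute $u = t/y$ in both integrals defining $g \ast h(x, y)$ and use $g(t, y) = y^{m-1} g(t/y, 1)$ together with the analogous identity for $h$ to obtain $(g \ast h)(x, y) = y^{m+n-1}(g \ast h)(x/y, 1)$. Since the claimed target $-y^{m+n-1} B_{m+n}(x/y)/(m+n)!$ scales in the same way, it suffices to establish
\begin{align*}
\int_0^x B_m(t) B_n(x - t)\,dt + \int_x^1 B_m(t) B_n(x + 1 - t)\,dt = -\frac{m!\,n!}{(m + n)!}\,B_{m+n}(x).
\end{align*}
To prove this I would introduce $f(x, u) := u e^{x u}/(e^u - 1) = \sum_{k \geq 0} B_k(x) u^k/k!$, multiply both sides by $s^m t^n / (m!\, n!)$, and sum over $m, n \geq 0$. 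Exchanging sum and integral and evaluating the geometric integrals $\int_a^b e^{u(s - t)}\,du = (e^{b(s - t)} - e^{a(s - t)})/(s - t)$, the left-hand side collapses to $[s f(x, t) - t f(x, s)]/(s - t)$; grouping the right-hand side by $N = m + n$ and using $\sum_{m + n = N} s^m t^n = (s^{N + 1} - t^{N + 1})/(s - t)$ gives $[t f(x, t) - s f(x, s)]/(s - t)$. Their difference simplifies to $f(x, s) + f(x, t)$, which contains only pure-$s$ and pure-$t$ monomials; consequently the coefficients of $s^m t^n$ agree for all $m, n \geq 1$, as required.

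To derive the first formula I would substitute $t \mapsto x - t$ in the first integral of the $y = 1$ identity and $t \mapsto x + 1 - t$ in the second, then split $B_m(x + 1 - t) = B_m(x - t) + m (x - t)^{m - 1}$ via the Bernoulli recurrence; the $B_m(x - t) B_n(t)$ pieces combine over $[0, x] \cup [x, 1] = [0, 1]$ into $\int_0^1 B_m(x - t) B_n(t)\,dt$, while the recurrence term contributes $m \int_x^1 (x - t)^{m - 1} B_n(t)\,dt$, and multiplying through by $-\binom{m + n}{m}$ completes the derivation. The main obstacle will be the generating-function calculation in the middle paragraph — specifically recognising that the two closed forms differ by $f(x, s) + f(x, t)$, which has no $s^m t^n$ monomials with both $m, n \geq 1$ — but once this cancellation is made explicit the remainder is routine.
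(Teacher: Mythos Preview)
Your argument is correct: the scaling reduction works as stated, the generating-function computation does collapse both sides to $[s f(x,t)-t f(x,s)]/(s-t)$ and $[t f(x,t)-s f(x,s)]/(s-t)$ respectively, their difference is $f(x,s)+f(x,t)$, which carries no mixed monomials $s^mt^n$ with $m,n\geq 1$, and the passage from the $y=1$ convolution to the first displayed identity via the recurrence $B_m(v+1)=B_m(v)+mv^{m-1}$ is clean.

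The paper itself does not prove Theorem~1.2 --- it is quoted from \cite[Theorem~3.6]{33}. The closest in-paper analogue is the proof of Theorem~1.5 (the Euler-polynomial version), and that proceeds in the opposite order and by a different mechanism: the integral identity is established first, by induction on $m$, using the evaluation $\int_0^1 E_m(t)E_n(1-t)\,dt$ from Lemma~3.2 to pin down the constant of integration at $x=1$; the convolution identity is then read off from the integral identity via the difference equation $E_n(x+1)+E_n(x)=2x^n$. Your route replaces the induction-plus-lemma package with a single generating-function identity, which is tidier and does not need an auxiliary integral formula; the paper's route is more elementary in that it never leaves the realm of finite polynomial manipulations, and it ports verbatim to the Euler setting where the paper actually needs it.
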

	\begin{theorem}[{See \cite[Eq. (3.9)]{33}}]\label{Theorem 1.3}  
		For \( \alpha, \beta > 1 \), we have
		\begin{align*}
			\frac{y^{\alpha-1} \zeta(1 - \alpha, \frac{x}{y})}{\Gamma(\alpha)} \ast \frac{y^{\beta-1} \zeta(1 - \beta, \frac{x}{y})}{\Gamma(\beta)} = \frac{y^{\alpha+\beta-1} \zeta(1 - \alpha - \beta, \frac{x}{y})}{\Gamma(\alpha + \beta)}.
		\end{align*}
	\end{theorem}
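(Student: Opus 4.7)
The plan is to reduce the convolution identity to a $1$-periodic (circular) convolution on the unit interval and then evaluate it via the Fourier expansion of the Hurwitz zeta function (Hurwitz's formula). Writing $g(x,y) = \Gamma(\alpha)^{-1}y^{\alpha-1}\zeta(1-\alpha, x/y)$ and $h(x,y)$ analogously, I would first perform the change of variable $u = t/y$ in each of the two integrals defining $g \ast h(x,y)$. With $w := x/y \in (0,1)$, this reduces the claim to the normalized identity
\[
\int_0^w \zeta(1-\alpha, u)\zeta(1-\beta, w-u)\,du + \int_w^1 \zeta(1-\alpha, u)\zeta(1-\beta, w+1-u)\,du = \frac{\Gamma(\alpha)\Gamma(\beta)}{\Gamma(\alpha+\beta)}\zeta(1-\alpha-\beta, w).
\]
Setting $\phi_s(u) := \zeta(1-s, u)$ on $(0,1]$ and extending it $1$-periodically to $\mathbb{R}$, the left-hand side is precisely the circular convolution $\int_0^1 \phi_\alpha(u)\,\phi_\beta(w-u)\,du$: for $u \in (w,1)$ the periodic extension gives $\phi_\beta(w-u) = \phi_\beta(w+1-u)$, matching the second integrand exactly.

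The key computational step is then to invoke Hurwitz's formula, which for $\mathrm{Re}(s) > 1$ and $0 < x \leq 1$ provides the absolutely convergent Fourier expansion
\[
\zeta(1-s, x) = \frac{\Gamma(s)}{(2\pi)^s}\sum_{n=1}^{\infty}\frac{e^{-i\pi s/2}e^{2\pi i n x} + e^{i\pi s/2}e^{-2\pi i n x}}{n^s},
\]
exhibiting Fourier coefficients $c_{\pm n}(s) = \Gamma(s)\,e^{\mp i\pi s/2}/(2\pi n)^s$ for $n \geq 1$ and $c_0(s) = 0$. By the standard Fourier convolution theorem, the Fourier coefficients of the circular convolution are the pointwise products $c_n(\alpha)c_n(\beta)$, and a single factorization $\Gamma(\alpha)\Gamma(\beta) = [\Gamma(\alpha)\Gamma(\beta)/\Gamma(\alpha+\beta)]\cdot\Gamma(\alpha+\beta)$ recasts the resulting series as $\Gamma(\alpha)\Gamma(\beta)\zeta(1-\alpha-\beta, w)/\Gamma(\alpha+\beta)$ via Hurwitz's formula applied at $s = \alpha+\beta$, completing the identification.

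The principal technical obstacle is justifying the term-by-term integration underlying the Fourier convolution theorem. This is routine here because the hypothesis $\alpha, \beta > 1$ forces $|c_n(\alpha)| = O(n^{-\alpha})$ and $|c_n(\beta)| = O(n^{-\beta})$, so both Fourier series converge absolutely and uniformly on $[0,1]$; thus $\phi_\alpha$ and $\phi_\beta$ are continuous on $[0,1]$, and Fubini's theorem legitimately interchanges summation and integration in the double sum $\sum_{n,m}c_n(\alpha)c_m(\beta)\,e^{2\pi i m w}\int_0^1 e^{2\pi i (n-m)u}\,du$, which collapses to the diagonal $n = m$. No integrability issue arises at the endpoints, and the resulting identity extends continuously from $w \in (0,1)$ to the full strip $y > 0$, $0 \leq x \leq y$.
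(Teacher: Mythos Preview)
Your argument is correct. Note that the paper does not itself prove Theorem~1.3---it is quoted from Sun~\cite{33}---but it does prove the alternating analogue, Theorem~1.6, and your method is essentially the same one used there: substitute the Fourier expansion of $\zeta(1-s,\,\cdot\,)$ (Hurwitz's formula, the role played in the paper by Eq.~(\ref{(1-6-1)})), exploit the orthogonality $\int_0^1 e^{2\pi i(n-m)u}\,du=\delta_{nm}$ to collapse the double sum, and read off the answer as the Fourier series of $\zeta(1-\alpha-\beta,\,\cdot\,)$. The paper simply carries this out by hand via the four cross-terms $I_1,\dots,I_4$, whereas you package the same computation as the circular-convolution/Fourier-coefficient theorem; neither presentation gains anything the other lacks.

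The one place the paper goes further (for Theorem~1.6) is in extending the identity from $0<x\le y$ to all $x\in\mathbb{R}$: it first derives, via the Beta integral, the difference relation $f(x+y,y)+f(x,y)=x^{\alpha+\beta-1}/\Gamma(\alpha+\beta)$ valid for every $x$, and then bootstraps from the Fourier computation on the fundamental strip. Your closing sentence ``extends continuously \dots\ to the full strip $y>0$, $0\le x\le y$'' is fine as far as it goes, but if the intended domain is all of $\mathbb{R}$ you would need an argument of that type; continuity alone will not propagate the identity beyond the strip.
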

	Furthermore, Sun  proved many other properties of invariant functions under the differentiation and the integration (see \cite[Theorems 3.1–3.3]{33}). In particular, \cite[Theorem 3.1(iii)]{33} shows that if \( \frac{\partial f}{\partial y} \) exists, then
	\begin{align*}
		f(x, y) = \int_{x}^{x - y} \frac{\partial}{\partial y} f(t, y) \, dt.
	\end{align*}
	
	\subsection{Euler polynomial and alternating invariant functions}
	As a ‘dual’ to the Bernoulli polynomials, the Euler polynomials $E_n(x)$ are defined by the following generating function (see \cite[Theorem 2]{2}):
	\begin{align*}
		\frac{2e^{xt}}{e^t+1} = \sum\limits_{k=0}^{\infty} E_k(x) \frac{t^k}{k!}.
	\end{align*}
	The Euler polynomials were introduced by Euler in 1755 while studying alternating power sums. He proved that for $m, n \in \mathbb{N}_0$, the following holds (see \cite[p. 932]{10}):
	\begin{align*}
		\sum_{j=0}^n (-1)^j j^m = \frac{(-1)^n E_m(n + 1) + E_m(0)}{2}.
	\end{align*}
	
	If $n$ is a positive odd integer, then we have the following distribution formula (see \cite[Theorem 3.3]{2}):
	\begin{equation}\label{(1-3)}
		\begin{aligned}
			\sum_{r=0}^{n-1} (-1)^r E_m \left( \frac{x + r}{n} \right) = \frac{E_m(x)}{n^m}, \quad 
			m \in \mathbb{N}_0.
	\end{aligned}\end{equation}
	
	The alternating Hurwitz zeta function, as a ‘dual’ form of the Hurwitz zeta function, is defined by the following series (see \cite[Eq. 1.5]{15}):
	\begin{equation}\label{(1-4)}
		\begin{aligned}   
			\zeta_E(s, x) = \sum_{n=0}^\infty \frac{(-1)^n}{(n + x)^s}, \quad \text{Re}(s) > 0.
	\end{aligned}\end{equation}
	It can be analytically continued to the entire complex plane $\mathbb{C}$ and has no poles. We have (see \cite[Eq. (1.15)]{14}, \cite[Eq. (1.10)]{17}):
	\begin{align*}
		\zeta(1-m, x) = -\frac{B_{m}(x)}{m}, \quad m \in \mathbb{N}
	\end{align*}
	and
	\begin{align*}
		\zeta_E(-m, x) = \frac{1}{2} E_m(x), \quad m \in \mathbb{N}_{0}.
	\end{align*}
	
	Recently, the alternating Hurwitz zeta function $\zeta_E(s, x)$ has been systematically studied, including its Fourier expansions, power series expansions, special values, integral representations, and convexity properties (see, e.g., \cite{22,15,23,17}). Moreover, in algebraic number theory, the alternating Hurwitz zeta function $\zeta_E(s, x)$ can be used to represent partial zeta functions of cyclotomic fields in Stark's conjecture (see \cite[Eq. (6.13)]{44}). In particular, when $x = 1$, the function $\zeta_E(s, x)$ reduces to the alternating zeta function (also known as the Dirichlet eta function, see \cite[p. 952]{23}):
	\begin{equation}\label{(1-5)}
		\begin{aligned}
			\zeta_E(s, 1) = \zeta_E(s) = \sum_{n=1}^{\infty} \frac{(-1)^{n+1}}{n^s} = \eta(s),
		\end{aligned}
	\end{equation}
	and we also have
	\begin{align*}
		\zeta_E(s) = \left(1 - \frac{1}{2^{s-1}}\right) \zeta(s).
	\end{align*}
	The following difference equation can be directly derived from (\ref{(1-4)}):
	\begin{align*}
		\zeta_E(s, x + 1) + \zeta_E(s, x) = x^{-s}.
	\end{align*}
	This further leads to the distribution formula for $\zeta_E(s, x)$ (see \cite[p. 2987]{46}):
	\begin{equation}\label{(1-6)}
		\begin{aligned}
			\zeta_E(s, n x) = n^{-s} \sum_{j=0}^{n-1} (-1)^j \zeta_E\left(s, x + \frac{j}{n}\right).
	\end{aligned}\end{equation}
	
	In 2021, Hu and Kim defined the generalized Stieltjes constant $\tilde{\gamma}_k(x)$ and the Euler constant $\tilde{\gamma}_0$ via the Taylor expansion of $\zeta_E(s, x)$ at $s = 1$ (see \cite[Eq. (1.18)]{16}):
	$$
	\zeta_E(s, x) = \sum_{k=0}^{\infty} \frac{(-1)^k \tilde{\gamma}_k(x)}{k!} (s - 1)^k.
	$$
	From this expansion, we obtain
	\begin{equation}\label{(1-7)}
		\begin{aligned}
			\tilde{\gamma}_0(x) = \zeta_E(1, x),
	\end{aligned}\end{equation}
	and denote
	\begin{equation}\label{(1-8)}
		\begin{aligned}
			\tilde{\gamma}_k := \tilde{\gamma}_k(1).
	\end{aligned}\end{equation}
	For $k \in \mathbb{N}_0$, and based on Eqs. (\ref{(1-5)}), (\ref{(1-7)}), and (\ref{(1-8)}), we have
	$$
	\tilde{\gamma}_0 = \tilde{\gamma}_0(1) = \zeta_E(1) = \sum_{n=1}^{\infty} \frac{(-1)^{n+1}}{n} = \log 2
	$$
	(see \cite[Eqs. (1.17) and (1.20)]{16}).
	They also defined the corresponding digamma function $\tilde{\psi}(x)$ as follows:
	$$
	\tilde{\psi}(x) := -\tilde{\gamma}_0(x) = -\zeta_E(1, x)
	$$
	(see \cite[Eq. (1.22)]{16}).
	The corresponding Gamma function $\tilde{\Gamma}(x)$ is given by the following differential equation (see \cite[p. 5]{16}):
	$$
	\tilde{\psi}(x) = \frac{\tilde{\Gamma}'(x)}{\tilde{\Gamma}(x)} = \frac{d}{dx} \log \tilde{\Gamma}(x),
	$$
	and has the following Weierstrass-type infinite product expansion (see \cite[Theorem 3.12]{16}):
	$$
	\tilde{\Gamma}(x) = \frac{1}{x} e^{\tilde{\gamma}_0 x} \prod_{k=1}^{\infty} \left( e^{-\frac{x}{k}} \left(1 + \frac{x}{k}\right) \right)^{(-1)^{k+1}}.
	$$
	
	Recently, Wang, Hu, and Kim \cite{47} further investigated the properties of the Gamma function $\tilde{\Gamma}(x)$, obtaining several results in analogy with those for the classical Gamma function $\Gamma(x)$, such as the integral representation, the recurrence relation, the generalized logarithmic convexity, the reflection formula, Legendre's duplication formula, the special values and Lerch's formula. In particular, we have the following distribution formula (see \cite[Theorem 2.17]{47}):
	\begin{equation}\label{(1-9)}
		\begin{aligned}
			\tilde{\Gamma}(n x) = \frac{1}{\sqrt{n}} \prod_{j=0}^{n-1} \tilde{\Gamma}\left(x + \frac{j}{n}\right)^{(-1)^j},
	\end{aligned}\end{equation}
	where $n$ is any positive odd integer.
	
	Inspired by Eqs. (\ref{(1-3)}), (\ref{(1-6)}), and (\ref{(1-9)}), we define a new class of invariant functions by the following relation:
\begin{definition}\label{stardef}
    A real function $f(x,y)$ defined for $y>0$ is called an \textit{alternating invariant function} if it satisfies the functional equation
  \begin{equation}\label{star}
		\begin{aligned}
			\sum\limits_{r=0}^{n-1} (-1)^r f(x + r y, n y) = f(x, y), \quad \text{for any positive odd integer $n$}.
		\end{aligned}
	\end{equation}
    The set of all alternating invariant functions is denoted by $\mathrm{InV}^*$.
\end{definition}	
	In the following, we will show that from Eq. (\ref{(1-3)}), we obtain $f(x, y) = y^m E_m\left(\frac{x}{y}\right) \in \mathrm{InV}^*$ (see Example \ref{Example 5.1}); from Eq. (\ref{(1-6)}), for $\text{Re}(s) < 0$ and $y > 0$, we have $y^{-s} \zeta_E\left(s, \frac{x}{y}\right) \in \mathrm{InV}^*$, and for $\text{Re}(s) > 0$, $y > 0$, and $\frac{x}{y} \notin \mathbb{Z}_{\leq 0}$, we have $y^{-s} \zeta_E\left(s, \frac{x}{y}\right) \in \mathrm{InV}^*$ (see Example \ref{Example 5.2}); from Eq. (\ref{(1-9)}), for $\frac{x}{y} \notin \mathbb{Z}_{\leq 0}$, we have $\log \left| \frac{\tilde{\Gamma}\left(\frac{x}{y}\right)}{\sqrt{y}} \right| \in \mathrm{InV}^*$ (see Example \ref{Example 5.3}).
	
	The following theorem introduces an operation $*$ on the set $\mathrm{InV}^*$.
	\begin{theorem}\label{Theorem 1.4}
		For $g(x,y)$ and $h(x, y) \in \mathrm{InV}^*$, define
		\begin{equation}\label{(1-10)}
			\begin{aligned}
				g \ast h(x, y) = \int_{0}^{x} g(t, y) h(x - t, y) \, dt - \int_{x}^{y} g(t, y) h(x + y - t, y) \, dt,
		\end{aligned}\end{equation}
		then $g \ast h(x, y) \in \mathrm{InV}^*$.
	\end{theorem}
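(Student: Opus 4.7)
The plan is to adapt the strategy used in the proof of Theorem~\ref{Theorem 1.1} for $\mathrm{InV}$ to the alternating setting, carefully tracking the sign changes from the factors $(-1)^r$ and exploiting the hypothesis that $n$ is odd at the decisive places. By the definition of $\mathrm{InV}^*$, I need to verify that
\begin{equation*}
\sum_{r=0}^{n-1}(-1)^r(g\ast h)(x+ry,\,ny)=(g\ast h)(x,y)
\end{equation*}
for every positive odd integer $n$. It suffices to argue on the fundamental strip $0\le x\le y$, where each shifted point $x+ry$ lies in $[ry,(r+1)y]\subset[0,ny]$ and the two integrals defining $(g\ast h)(x+ry,ny)$ split cleanly over subintervals of length $y$.

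The main bookkeeping step is to write
\begin{equation*}
\int_0^{x+ry}=\sum_{j=0}^{r-1}\int_{jy}^{(j+1)y}+\int_{ry}^{x+ry},\qquad \int_{x+ry}^{ny}=\int_{x+ry}^{(r+1)y}+\sum_{j=r+1}^{n-1}\int_{jy}^{(j+1)y},
\end{equation*}
then perform the substitution $t=jy+u$ on each piece, multiply by $(-1)^r$, and sum over~$r$. The result organizes itself into four blocks $S_1+S_2+S_3+S_4$. The ``boundary'' blocks $S_2$ and $S_3$ collapse at once via the alternating invariance of $g$, since $\sum_r(-1)^r g(ry+u,ny)=g(u,y)$, yielding $\int_0^x g(u,y)h(x-u,ny)\,du$ and $-\int_x^y g(u,y)h(x+ny-u,ny)\,du$. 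The two ``interior'' double-sum blocks $S_1,S_4$ require the reindexings $k=r-j$ and $k=r+n-j$; the second costs a factor $(-1)^{-n}=-1$, which is the first place where the oddness of $n$ intervenes. The complementary $k$-ranges then cover $\{1,\dots,n-1\}$ for every $j$, so the resulting double sum factorizes and collapses by alternating invariance of both $g$ and $h$, producing
\begin{equation*}
S_1+S_4=\int_0^y g(u,y)\bigl[h(x-u,y)-h(x-u,ny)\bigr]\,du,
\end{equation*}
where the subtracted term accounts for the missing $k=0$ contribution.

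Assembling the four blocks and comparing with $(g\ast h)(x,y)$, the identity reduces to the pointwise claim
\begin{equation*}
h(z,y)+h(z+y,y)=h(z,ny)+h(z+ny,ny),\qquad z=x-u,\ u\in[x,y],
\end{equation*}
which I expect to be the main conceptual obstacle. To establish it, I would expand $h(z+y,y)=\sum_{r=0}^{n-1}(-1)^r h(z+(r+1)y,ny)$ using the alternating invariance of $h$ and reindex $s=r+1$; the boundary terms of the resulting telescoped sum contribute $h(z,ny)+h(z+ny,ny)$ (using $(-1)^n=-1$ once more), while the interior terms recombine to $-h(z,y)$, and rearrangement gives the claim. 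Substituting this identity back into the block sum converts the remaining $\int_x^y$ correction into exactly the terms needed to match $(g\ast h)(x,y)$, completing the proof. The essential new feature over Theorem~\ref{Theorem 1.1} is the need to invoke $n$ odd in two places---the interior reindexing and the telescoping identity for $h$---each time via $(-1)^n=-1$.
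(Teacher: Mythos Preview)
Your direct block decomposition on the strip $0\le x\le y$ is correct and is a genuinely different route from the paper's. The paper does \emph{not} split the integrals $\int_0^{x+ry}$ and $\int_{x+ry}^{ny}$ directly; instead it introduces the auxiliary function
\[
f_1(x,y)=\int_0^y(-1)^{\lfloor (t+x)/y\rfloor}\,h\bigl(y-y\{(t+x)/y\},y\bigr)\,g(t,y)\,dt,
\]
shows $f_1\in\mathrm{InV}^*$ via Proposition~\ref{Pro 3.7} (the periodized-function machinery), and then observes that $f_1(x,y)=(g\ast h)(y-x,y)$ on $(0,y]$, so the reflection Proposition~\ref{Pro 3.5} transports the identity to $g\ast h$. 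Your four-block computation with the reindexings $k=r-j$ and $k=r+n-j$ gets to the same place on $(0,y]$ more transparently, and your telescoping identity $h(z,y)+h(z+y,y)=h(z,ny)+h(z+ny,ny)$ is exactly the relation $\bar h(x,ny)=\bar h(x,y)$ that the paper also isolates.

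There is, however, a real gap: your sentence ``It suffices to argue on the fundamental strip $0\le x\le y$'' is not justified, and in fact it does \emph{not} suffice. Membership in $\mathrm{InV}^*$ requires the identity for all $x\in\mathbb{R}$, and the paper spends the second half of its proof on precisely this extension. The mechanism is to set $\bar f(x,y)=f(x+y,y)+f(x,y)$ for $f=g\ast h$ and compute directly that
\[
\bar f(x,y)=\int_0^x \bar g(t,y)\,\bar h(x-t,y)\,dt,
\]
from which $\bar f(x,ny)=\bar f(x,y)$ follows because $\bar g$ and $\bar h$ satisfy this (your telescoping identity, applied to $g$ and $h$). With that in hand, the $(0,y]$ identity can be shifted to $(y,2y]$, $(-y,0]$, etc. You have proved the telescoping identity for $h$, but you have not established it for $f=g\ast h$ itself, and that step is not a consequence of what you have done on the strip; it requires the separate integral computation above (or an appeal to Proposition~\ref{Pro 3.8}). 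Once you add this, your argument is complete and arguably cleaner than the paper's on the strip portion.
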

	\begin{Example}\label{Example 5.1}
		$y^mE_m\left(\frac{x}{y}\right) \in \mathrm{InV}^*\, (m \in \mathbb{N}).$
	\end{Example}
	\begin{proof}
		For $y > 0$ and $n$ being an odd number, we have
		$$\sum\limits_{r=0}^{n-1}(-1)^r(ny)^mE_m\left(\frac{x+ry}{ny}\right)=\sum\limits_{r=0}^{n-1}(-1)^r(ny)^mE_m\left( \frac{\frac{x}{y}+r}{n}\right).$$
		Then from the distribution equation of Euler polynomials (\ref{(1-3)}), we obtain
		\begin{align*}
			\sum\limits_{r=0}^{n-1}(-1)^r(ny)^mE_m\left( \frac{x+ry}{ny}\right)
			&=\frac{E_m\left( \frac{x}{y}\right) }{n^m}\cdot n^my^m\\
			&=y^mE_m\left( \frac{x}{y}\right).
		\end{align*}
		Thus, $y^mE_m\left( \frac{x}{y}\right) \in \mathrm{InV}^*.$
		This completes the proof.
	\end{proof} 
	In analogy with Theorems \ref{Theorem 1.2}, we have the following convolution formulas for Euler polynomials.
	\begin{theorem}\label{Theorem 1.5}
		For any $m, n \in \mathbb{N}$, we have
		\begin{align*}
			E_{m+n+1}(x) = \frac{\binom{m+n}{m} (m+n+1)}{2} \left( \int_{0}^{1} E_m(x - t) E_n(t) \, dt - 2 \int_{x}^{1} (x - t)^m E_n(t) \, dt \right)
		\end{align*}
		and
		\begin{align*}
			\frac{y^m E_m\left(\frac{x}{y}\right)}{2 m!} \ast \frac{y^n E_n\left(\frac{x}{y}\right)}{2 n!} = \frac{y^{m+n+1} E_{m+n+1}\left(\frac{x}{y}\right)}{2 (m+n+1)!}.
		\end{align*}
	\end{theorem}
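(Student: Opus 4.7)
The plan is to prove the explicit integral formula for $E_{m+n+1}(x)$ first by a two-variable generating function argument, and then deduce the convolution identity from it by a change of variable together with the Euler functional equation $E_m(z+1)+E_m(z)=2z^m$.

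For the first identity, I would define
\begin{align*}
G(t,u) &= \sum_{m,n\ge 0}\frac{t^m u^n}{m!\,n!}\int_0^1 E_m(x-s)E_n(s)\,ds,\\
J(t,u) &= \sum_{m,n\ge 0}\frac{t^m u^n}{m!\,n!}\int_x^1 (x-s)^m E_n(s)\,ds,
\end{align*}
interchange sum and integral, and apply the Euler generating function $\sum_k E_k(z)r^k/k!=2e^{zr}/(e^r+1)$. Each of $G$ and $J$ then reduces to an elementary integral of the form $\int e^{s(u-t)}\,ds$ and evaluates to a closed expression with denominator $u-t$. The heart of the argument is then to carry out the subtraction $G(t,u)-2J(t,u)$ over the common denominator $(e^t+1)(e^u+1)(u-t)$ and verify that the resulting numerator collapses to $4\bigl((e^t+1)e^{xu}-(e^u+1)e^{xt}\bigr)$, giving
\begin{align*}
G(t,u)-2J(t,u)=\frac{2}{u-t}\left[\frac{2e^{xu}}{e^u+1}-\frac{2e^{xt}}{e^t+1}\right]=2\sum_{m,n\ge 0}\frac{E_{m+n+1}(x)}{(m+n+1)!}\,t^m u^n,
\end{align*}
where the last equality uses $(u^k-t^k)/(u-t)=\sum_{m+n=k-1}t^m u^n$. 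Comparing the coefficient of $t^m u^n/(m!\,n!)$ on both sides and rewriting $(m+n+1)!/(m!\,n!)$ as $\binom{m+n}{m}(m+n+1)$ yields the first identity.

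For the convolution identity, I would substitute $g(t,y)=y^m E_m(t/y)/(2m!)$ and $h(s,y)=y^n E_n(s/y)/(2n!)$ into the definition \eqref{(1-10)}, factor out $y^{m+n}/(4m!n!)$, and change variables via $\tau=t/y$ to reduce everything to an expression in $\xi:=x/y$ with an overall factor $y^{m+n+1}$. A second substitution $s=\xi-\tau$ in the integral over $[0,\xi]$ and $s=\xi+1-\tau$ in the integral over $[\xi,1]$ produces two terms of the shape $\int E_m(\xi-s)E_n(s)\,ds$ and $\int E_m(\xi+1-s)E_n(s)\,ds$. Applying the functional equation in the form $E_m(\xi+1-s)=2(\xi-s)^m-E_m(\xi-s)$ then merges the two contributions involving $E_m(\xi-s)E_n(s)$ into $\int_0^1 E_m(\xi-s)E_n(s)\,ds$, leaving precisely $\int_0^1 E_m(\xi-s)E_n(s)\,ds-2\int_\xi^1(\xi-s)^m E_n(s)\,ds$. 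Invoking the first identity at the argument $\xi$ then identifies this with $(2m!n!/(m+n+1)!)\,E_{m+n+1}(\xi)$, and the desired convolution formula follows.

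The main technical obstacle is the algebraic simplification in the generating function step: the closed forms for $G(t,u)$ and $J(t,u)$ each contain several exponential terms in the numerator over a common denominator involving $(e^t+1)$, $(e^u+1)$, and $u-t$, and the cancellations (in particular of the $e^{(x-1)t+u}$ terms) that collapse $G-2J$ down to the clean two-term form $(e^t+1)e^{xu}-(e^u+1)e^{xt}$ must be tracked carefully. Once that reduction is in hand, the remaining steps are routine changes of variable and applications of the Euler functional equation.
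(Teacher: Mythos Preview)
Your approach is correct and, for the first identity, genuinely different from the paper's. The paper proves
\[
\int_0^1 E_m(x-t)E_n(t)\,dt-2\int_x^1(x-t)^m E_n(t)\,dt=\frac{2E_{m+n+1}(x)}{\binom{m+n}{m}(m+n+1)}
\]
by induction on $m$: one differentiates both sides in $x$ (using $E_m'=mE_{m-1}$ and $\int E_n=E_{n+1}/(n+1)$), reduces to the previous case, and fixes the constant of integration by evaluating at $x=1$ via the cited lemma $\int_0^1 E_m(1-t)E_n(t)\,dt=2E_{m+n+1}(1)/[\binom{m+n}{n}(m+n+1)]$. Your two-variable generating function argument bypasses both the induction and that auxiliary lemma, handling all $m,n$ simultaneously; the paper's argument is more elementary but leans on an external evaluation. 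For the second identity both proofs are essentially the same: expand the convolution, apply the Euler difference equation $E_k(z+1)+E_k(z)=2z^k$ to collapse the two pieces, rescale $t\mapsto t/y$, and invoke the first identity. (The paper applies the difference equation to the $E_n$ factor and ends with the $m\leftrightarrow n$ variant of the first identity, whereas you apply it to $E_m$; this is immaterial.)
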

	\begin{Example}\label{Example 5.2}
		For $\textup{Re}(s)<0$ and $y>0$, we have
		$y^{-s}\zeta_E\left( s,\frac{x}{y}\right) \in \mathrm{InV}^*$;
		and for $\textup{Re}(s)>0$, $y>0$ with $\frac{x}{y}\in\mathbb{C}\setminus\mathbb{Z}_{\leq0}$, we have
		$y^{-s}\zeta_E\left( s,\frac{x}{y}\right) \in \mathrm{InV}^*$.
	\end{Example}
	
	\begin{proof}
		By \cite[Eq. (1.7)]{20}, for $\textup{Re}(s)<1$ and $x\in(0,1]$, we have the Fourier expansion:
		\begin{equation}\label{zetaFour}
			\zeta_E(s,x)=\frac{2\Gamma(1-s)}{\pi^{1-s}}\left[\sin \frac{\pi s}{2}\sum_{m=0}^{\infty}\frac{\cos\left( (2m+1)\pi x\right)}{(2m+1)^{1-s}}+\cos \frac{\pi s}{2}\sum_{m=0}^{\infty}\frac{\sin\left( (2m+1)\pi x\right)}{(2m+1)^{1-s}}\right].
		\end{equation}
		Now consider the case $\textup{Re}(s)<0$ and $\frac{x}{y}\in(0,1]$. From the above expression, we obtain
		\begin{align*}
			&\quad\frac{2\Gamma(1-s)}{\pi^{1-s}}\sum_{r=0}^{n-1}(-1)^r\frac{1}{ny}\sin \frac{\pi s}{2}\sum_{m=0}^{\infty}\left(\frac{2m+1}{ny}\right)^{s-1}\cos\left( (2m+1)\pi \frac{x+ry}{ny}\right) \\
			&\quad+ \frac{2\Gamma(1-s)}{\pi^{1-s}}\sum_{r=0}^{n-1}(-1)^r\frac{1}{ny}\cos \frac{\pi s}{2}\sum_{m=0}^{\infty}\left(\frac{2m+1}{ny}\right)^{s-1}\sin \left( (2m+1)\pi \frac{x+ry}{ny}\right)\\
			&=\frac{2\Gamma(1-s)}{\pi^{1-s}}\frac{1}{y}\sin \frac{\pi s}{2}\sum_{m=0}^{\infty}\left(\frac{2m+1}{y}\right)^{s-1}\cos\left( (2m+1)\pi \frac{x}{y}\right) \\
			&\quad+\frac{2\Gamma(1-s)}{\pi^{1-s}}\frac{1}{y}\cos \frac{\pi s}{2}\sum_{m=0}^{\infty}\left(\frac{2m+1}{y}\right)^{s-1}\sin\left( (2m+1)\pi \frac{x}{y}\right) \\
			&\quad(\text{by Proposition}~ \ref{Pro 3.7}~ \text{and Eq.}~(\ref{star}))\\        
			&= y^{-s}\frac{2\Gamma(1-s)}{\pi^{1-s}}\left[\sin \frac{\pi s}{2}\sum_{m=0}^{\infty}\frac{\cos\left( (2m+1)\pi \frac{x}{y}\right)}{(2m+1)^{1-s}}+\cos \frac{\pi s}{2}\sum_{m=0}^{\infty}\frac{\sin\left( (2m+1)\pi \frac{x}{y}\right)}{(2m+1)^{1-s}}\right]\\
			&= y^{-s}\zeta_E\left( s,\frac{x}{y}\right).
		\end{align*}
		For $x\in \mathbb{C}\setminus\mathbb{Z}_{\leq0}$, from the difference equation of $\zeta_E(s,x):$
		$$\zeta_{E}(s,x+1)+\zeta_{E}(s,x)=x^{-s},$$
		we have
		\begin{align*}
		       y^{-s}\zeta_E\left( s,\frac{x+y}{y}\right) + y^{-s}\zeta_E\left( s,\frac{x}{y}\right)
			&= y^{-s}\zeta_E\left( s,1+\frac{x}{y}\right) + y^{-s}\zeta_E\left( s,\frac{x}{y}\right) \\
			 &= x^{-s}
		\end{align*}
		and
		\begin{align*}
			\lim_{a \to 0^+}a^{-s}\zeta_E\left(s,\frac{x+a}{a}\right) + a^{-s}\zeta_E\left( s,\frac{x}{a}\right) = x^{-s}.
		\end{align*}
		Therefore
		\begin{align*}
			y^{-s}\zeta_E\left( s,\frac{x+y}{y}\right) + y^{-s}\zeta_E\left( s,\frac{x}{y}\right) = \lim_{a \to 0^+}a^{-s}\zeta_E\left( s,\frac{x+a}{a}\right) + a^{-s}\zeta_E\left( s,\frac{x}{a}\right).
		\end{align*}
		Hence by Proposition \ref{Pro 3.8}, when $\textup{Re}(s)<0$ and $\frac{x}{y}\in\mathbb{C}\setminus\mathbb{Z}_{\leq0}$, we have $f(x,y)\in \mathrm{InV}^*$.
		
		Recall that for $\textup{Re}(s)>0$ and $x\in \mathbb{C}\setminus\mathbb{Z}_{\leq0}$,  
		\begin{align*}
			\zeta_E(s,x)=\sum_{m=0}^{\infty}\frac{(-1)^m}{(m+x)^s}.
		\end{align*}
		Thus
		\begin{align*}
			&\sum_{r=0}^{n-1}(-1)^r(ny)^{-s}\zeta_E\left( s,\frac{x+ry}{ny}\right) = \sum_{r=0}^{n-1}(-1)^r\sum_{m=0}^{\infty}(-1)^m\frac{1}{(ny)^s\left(m+\frac{x+ry}{ny}\right)^s}.
		\end{align*}
		Because $n$ is odd, we have
		\begin{align*}
			&\sum_{r=0}^{n-1}(-1)^r(ny)^{-s}\zeta_E\left( s,\frac{x+ry}{ny}\right)\\
			 = &\sum_{r=0}^{n-1}\sum_{m=0}^{\infty}(-1)^{nm+r}\left(\frac{1}{(nm+r)y+x}\right)^s \\    
			= &\sum_{m=0}^{\infty}(-1)^m\left(\frac{1}{my+x}\right)^s \\
			= &y^{-s}\sum_{m=0}^{\infty}(-1)^m\left(\frac{1}{m+\frac{x}{y}}\right)^s \\
			= &y^{-s}\zeta_E\left( s,\frac{x}{y}\right).
		\end{align*}
		This completes the proof.
	\end{proof}	
	In analogy with  Theorem \ref{Theorem 1.3}, we have the following convolution formulas for the alternating Hurwitz zeta function.
	\begin{theorem}\label{Theorem 1.6}
		For real numbers $\alpha, \beta > 1$, we have
		\begin{align*}
			\frac{y^{\alpha-1} \zeta_E\left(1 - \alpha, \frac{x}{y}\right)}{\Gamma(\alpha)} \ast \frac{y^{\beta-1} \zeta_E\left(1 - \beta, \frac{x}{y}\right)}{\Gamma(\beta)} = \frac{y^{\alpha + \beta - 1} \zeta_E\left(1 - \alpha - \beta, \frac{x}{y}\right)}{\Gamma(\alpha + \beta)}.
		\end{align*}
	\end{theorem}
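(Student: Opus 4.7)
The plan is to reduce Theorem \ref{Theorem 1.6} to the integer case provided by Theorem \ref{Theorem 1.5}, and then extend to arbitrary real $\alpha, \beta > 1$ by analytic continuation.

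When $\alpha = m+1$ and $\beta = n+1$ for positive integers $m, n$, the special value $\zeta_E(-m, x) = \frac{1}{2} E_m(x)$ together with $\Gamma(m+1) = m!$ gives
\begin{align*}
\frac{y^{\alpha-1} \zeta_E(1-\alpha, x/y)}{\Gamma(\alpha)} = \frac{y^m E_m(x/y)}{2\, m!},
\end{align*}
with analogous expressions for the $\beta$ and $\alpha+\beta$ factors. Hence in this case Theorem \ref{Theorem 1.6} coincides precisely with the second identity of Theorem \ref{Theorem 1.5}, which is already established.

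To extend to all real $\alpha, \beta > 1$, I would fix $\beta = n+1$ a positive integer and view the difference between the two sides of Theorem \ref{Theorem 1.6} as a holomorphic function $D(\alpha)$ on $\{\operatorname{Re}(\alpha) > 1\}$, with $x, y$ fixed. Both the convolution factor and the right-hand side depend analytically on $\alpha$ because $\zeta_E(1-\alpha, \cdot)/\Gamma(\alpha)$ is entire in $\alpha$. The Fourier expansion (\ref{zetaFour}) at $s = 1 - \alpha$, after using $\sin\frac{\pi(1-\alpha)}{2} = \cos\frac{\pi\alpha}{2}$, $\cos\frac{\pi(1-\alpha)}{2} = \sin\frac{\pi\alpha}{2}$ and the cosine addition formula, yields
\begin{align*}
\frac{y^{\alpha-1}\zeta_E(1-\alpha, x/y)}{\Gamma(\alpha)} = \frac{2 y^{\alpha-1}}{\pi^\alpha}\sum_{k=0}^\infty \frac{\cos((2k+1)\pi x/y - \pi\alpha/2)}{(2k+1)^\alpha},
\end{align*}
from which one can derive a growth bound $|D(\alpha)| \leq C e^{c|\alpha|}$ with some $c < \pi$. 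Since $D$ vanishes at every integer $\alpha \geq 2$, Carlson's theorem forces $D \equiv 0$, and a symmetric argument with $\alpha$ fixed and $\beta$ varying completes the extension.

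The main obstacle will be verifying the growth bound required by Carlson's theorem: the convolution (\ref{(1-10)}) depends on $\alpha$ through the factors $\pi^{-\alpha}$, $y^{\alpha-1}$, and the trigonometric coefficients $\cos(\pi\alpha/2), \sin(\pi\alpha/2)$, and these need to be controlled uniformly on vertical strips. A more direct alternative, bypassing Carlson's theorem, is to substitute the above Fourier expansion for both factors into (\ref{(1-10)}), apply product-to-sum identities, and integrate term-by-term; the cross-frequency contributions should cancel between $\int_0^x$ and $\int_x^y$, while the diagonal ones reassemble into the Fourier series of the right-hand side. This route avoids the uniform growth estimate but requires careful bookkeeping of the two pieces of the convolution with their opposite signs.
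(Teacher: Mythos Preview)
Your ``more direct alternative''—substituting the Fourier expansion into the convolution and integrating term by term—is exactly the route the paper takes. The key observation that makes the bookkeeping clean is that for $0<x\le y$ the minus sign in \eqref{(1-10)} combines with $e^{\pi i(2n+1)}=-1$ to turn the two pieces $\int_0^x - \int_x^y$ into a single integral $\int_0^y$; orthogonality of $e^{2\pi i k t/y}$ on $[0,y]$ then kills the cross-frequency terms ($I_2=I_3=0$) and the diagonal terms reassemble into the Fourier series of $y^{\alpha+\beta-1}\zeta_E(1-\alpha-\beta,x/y)/\Gamma(\alpha+\beta)$. So the cancellation you anticipate is not between the two pieces of the convolution but comes from orthogonality after they are merged.

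Your primary Carlson-theorem approach is genuinely different. It is attractive because it reduces the analytic statement to the purely algebraic Theorem~\ref{Theorem 1.5}, and the growth bound $O(e^{\pi|\mathrm{Im}\,\alpha|/2})$ you need does follow from the Fourier expansion: each factor contributes at most $e^{\pi|\mathrm{Im}\,\alpha|/2}$ through the $\cos(\cdot-\pi\alpha/2)$ term, and only one factor depends on $\alpha$ when $\beta$ is fixed. The paper's approach, by contrast, requires no analytic continuation machinery but does demand the explicit Fourier calculation.

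There is, however, a gap common to both of your sketches that the paper handles separately. The Fourier expansion \eqref{zetaFour} is only valid for the argument in $(0,1]$, so both your Carlson growth estimate and the direct Fourier computation establish the identity only for $0<x\le y$. The theorem is asserted for all real $x$. The paper closes this gap by an independent computation: using the difference equation $\zeta_E(s,u+1)+\zeta_E(s,u)=u^{-s}$ it shows that
\[
f(x+y,y)+f(x,y)=\frac{x^{\alpha+\beta-1}}{\Gamma(\alpha+\beta)}
\]
(via the Beta integral), which is exactly the same relation satisfied by the right-hand side; this lets one propagate the identity from the fundamental strip $0<x\le y$ to all of $\mathbb{R}$. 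Your plan should include this extension step, since neither Carlson nor the Fourier substitution sees it automatically.
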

	The following parts of this paper will be organized as follows. 
	In Section 2, we shall systematically investigate the fundamental properties of alternating invariant functions, proving that the set of alternating invariant functions is closed under the operation translation, reflection, and differentiation. 
	In Section 3, we shall derive convolution formulas for alternating invariant functions and prove their closure under convolution operations (Theorem \ref{Theorem 1.4}). In addition, we will prove convolution formulas for the Euler polynomials and the alternating Hurwitz zeta function, respectively (Theorems \ref{Theorem 1.5} and \ref{Theorem 1.6}).
	In Section 4, we shall provide further examples of alternating invariant functions, including the Gamma function $\tilde{\Gamma}(x)$ corresponding to the alternating Hurwitz zeta function, several appropriate combinations of trigonometric, exponential functions and  logarithmic functions, et al.	
	\section{Properties}
	In this section, we systematically investigate the properties of alternating invariant functions. 
In particular, Proposition~\ref{Pro 3.2} establishes the closure of alternating invariant functions 
under differentiation operations. Propositions~\ref{Pro 3.1}--\ref{Pro 3.4} can be obtained directly 
from Definition~\ref{stardef}.
		\begin{proposition}\label{Pro 3.1}
		If $f(x,y)\in \mathrm{InV}^*$, $a,b\in \mathbb{R}$ and $c>0$, then $F(x,y)=af(b+cx,cy)\in \mathrm{InV}^*$.  
	\end{proposition}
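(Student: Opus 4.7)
The plan is to verify the defining functional equation \eqref{star} for $F$ by a direct change of variables that reduces the identity for $F$ to the identity already satisfied by $f$. Since $a$ enters only as a scalar multiplier that commutes with the sum, the content lies entirely in handling the linear substitution $(x,y)\mapsto(b+cx,cy)$.

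First I would compute $F(x+ry,ny)$ for any positive odd integer $n$ and any $r\in\{0,1,\dots,n-1\}$. Setting $x':=b+cx$ and $y':=cy$ (which is positive since $c>0$ and $y>0$), we have
\begin{align*}
F(x+ry,ny)=af\bigl(b+c(x+ry),\,c(ny)\bigr)=af(x'+ry',\,ny').
\end{align*}

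Next I would sum with alternating signs and pull $a$ outside:
\begin{align*}
\sum_{r=0}^{n-1}(-1)^r F(x+ry,ny)=a\sum_{r=0}^{n-1}(-1)^r f(x'+ry',\,ny').
\end{align*}
Because $y'>0$ and $f\in\mathrm{InV}^*$, Definition~\ref{stardef} applied at $(x',y')$ gives the inner sum equal to $f(x',y')$. Therefore
\begin{align*}
\sum_{r=0}^{n-1}(-1)^r F(x+ry,ny)=a f(b+cx,\,cy)=F(x,y),
\end{align*}
which is precisely the functional equation \eqref{star} for $F$. This holds for every positive odd integer $n$, so $F\in\mathrm{InV}^*$.

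There is no real obstacle here; the only points to be careful about are that $cy>0$ whenever $y>0$ (so $F$ is indeed defined on the correct domain and we may legitimately apply the invariance of $f$ at $(x',y')$), and that the substitution preserves the arithmetic progression structure $x+ry,\,ny$ on the left to $x'+ry',\,ny'$ on the right with the same index $r$, so the alternating signs $(-1)^r$ match up term by term. The hypothesis that $n$ is odd is used implicitly only through the invariance of $f$; no additional parity considerations enter.
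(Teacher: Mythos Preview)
Your proof is correct and is precisely the direct verification from Definition~\ref{stardef} that the paper has in mind; the paper itself omits the argument, stating only that Propositions~\ref{Pro 3.1}--\ref{Pro 3.4} follow directly from the definition.
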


	\begin{proposition}\label{Pro 3.2}
		If $f(x,y)\in \mathrm{InV}^*$ and $\frac{\partial f}{\partial x}$ exists, then $\frac{\partial f}{\partial x}\in \mathrm{InV}^*$.  
	\end{proposition}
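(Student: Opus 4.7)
The plan is to differentiate the defining alternating-invariance identity with respect to the first variable $x$. Because the sum in Definition~\ref{stardef} is finite, differentiation commutes with summation with no convergence issue, and since the second argument $ny$ does not depend on $x$, the chain rule gives a clean result.

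More precisely, I would start from the hypothesis that $f \in \mathrm{InV}^*$, i.e., for every positive odd integer $n$,
\begin{equation*}
\sum_{r=0}^{n-1} (-1)^r f(x+ry, ny) = f(x,y).
\end{equation*}
Both sides are functions of $x$ (with $y>0$ fixed), and by assumption $\frac{\partial f}{\partial x}$ exists. Differentiating each term with respect to $x$, the chain rule applied to $x \mapsto f(x+ry, ny)$ produces $\frac{\partial f}{\partial x}(x+ry, ny) \cdot 1$, since $\frac{\partial}{\partial x}(x+ry) = 1$ and $\frac{\partial}{\partial x}(ny) = 0$. Differentiating the right-hand side gives $\frac{\partial f}{\partial x}(x,y)$.

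Therefore, interchanging the finite sum and the derivative yields
\begin{equation*}
\sum_{r=0}^{n-1} (-1)^r \frac{\partial f}{\partial x}(x+ry, ny) = \frac{\partial f}{\partial x}(x,y),
\end{equation*}
which is exactly the defining relation \eqref{star} for $\frac{\partial f}{\partial x}$ to lie in $\mathrm{InV}^*$. Since this holds for every positive odd integer $n$, the conclusion follows. There is no substantive obstacle here: the only point worth noting is that differentiability of $f$ in its first slot is precisely what allows the termwise differentiation, and the factor $n$ in the second argument contributes nothing because $y$ is held fixed when we differentiate in $x$.
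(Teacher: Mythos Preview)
Your proof is correct and matches the paper's approach: the paper simply states that Propositions~\ref{Pro 3.1}--\ref{Pro 3.4} follow directly from Definition~\ref{stardef}, and your argument is exactly the termwise differentiation of the finite sum that this entails.
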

	 	
	\begin{proposition}\label{Pro 3.3}
		Let $F(x,y)=f(x,y)+ig(x,y)$, where $f(x,y)$ is the real part and $g(x,y)$ is the imaginary part. If $F(x,y)\in \mathrm{InV}^*$, then $f(x,y)$ and $g(x,y)\in \mathrm{InV}^*$.  
	\end{proposition}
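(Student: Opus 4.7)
The plan is to unpack the functional equation that $F$ satisfies and split it into real and imaginary parts. Since $F(x,y) \in \mathrm{InV}^*$, by Definition~\ref{stardef} we have
\begin{equation*}
\sum_{r=0}^{n-1} (-1)^r F(x+ry, ny) = F(x,y)
\end{equation*}
for every positive odd integer $n$. Substituting $F = f + ig$ on both sides and using linearity of the finite sum (noting that the coefficients $(-1)^r$ are real), this expands to
\begin{equation*}
\sum_{r=0}^{n-1} (-1)^r f(x+ry, ny) \;+\; i\sum_{r=0}^{n-1} (-1)^r g(x+ry, ny) \;=\; f(x,y) + i\, g(x,y).
\end{equation*}

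Since $f$ and $g$ are real-valued by assumption, each of the two sums on the left-hand side is itself real-valued. Therefore I would conclude by comparing real and imaginary parts separately, which yields
\begin{equation*}
\sum_{r=0}^{n-1} (-1)^r f(x+ry, ny) = f(x,y)
\quad\text{and}\quad
\sum_{r=0}^{n-1} (-1)^r g(x+ry, ny) = g(x,y).
\end{equation*}
Both identities hold for every positive odd integer $n$, so $f, g \in \mathrm{InV}^*$ by Definition~\ref{stardef}, completing the argument.

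There is no real obstacle here: the only subtlety worth flagging is the hypothesis that $f$ and $g$ are real-valued (so that equating real and imaginary parts is legitimate), which is already built into the statement of the proposition. The whole proof is essentially a one-line observation and fits the remark in the paper that Propositions~\ref{Pro 3.1}--\ref{Pro 3.4} follow directly from the definition.
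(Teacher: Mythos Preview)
Your proof is correct and matches the paper's approach: the paper does not spell out a proof of this proposition but simply notes that Propositions~\ref{Pro 3.1}--\ref{Pro 3.4} can be obtained directly from Definition~\ref{stardef}, which is exactly the one-line argument you give.
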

	 	
	\begin{proposition}\label{Pro 3.4}
		If $f(x,y)\in \mathrm{InV}^*$, $m,n\in \mathbb{N}$ and $m,n$ are odd, then we have
		\begin{align*}
			\sum_{r=0}^{n-1}(-1)^r f(x+rmy,ny) = \sum_{r=0}^{m-1}(-1)^r f(x+rny,my).
		\end{align*}
	\end{proposition}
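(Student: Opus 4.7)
The plan is to unfold each side of the identity one more step using the defining functional equation~(\ref{star}), producing two double sums that will turn out to be term-by-term identical. Since both $m$ and $n$ are odd positive integers and $y>0$, I may apply~(\ref{star}) to $f$ with $ny$ (or $my$) in the second slot and with $m$ (or $n$) as the scaling factor.

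Concretely, for the left-hand side I would fix $r\in\{0,\ldots,n-1\}$ and apply~(\ref{star}) to $f(x+rmy,ny)$ with the odd scaling factor $m$, obtaining
\begin{equation*}
f(x+rmy,ny)\;=\;\sum_{s=0}^{m-1}(-1)^{s}\,f\bigl(x+(rm+sn)y,\;mny\bigr).
\end{equation*}
Weighting by $(-1)^{r}$ and summing over $r$ then converts the left-hand side of the claimed identity into the double sum
\begin{equation*}
\sum_{r=0}^{n-1}\sum_{s=0}^{m-1}(-1)^{r+s}\,f\bigl(x+(rm+sn)y,\;mny\bigr).
\end{equation*}

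For the right-hand side I would perform the symmetric manoeuvre: apply~(\ref{star}) to $f(x+rny,my)$ with the odd scaling factor $n$, which turns the right-hand side into
\begin{equation*}
\sum_{r=0}^{m-1}\sum_{s=0}^{n-1}(-1)^{r+s}\,f\bigl(x+(rn+sm)y,\;mny\bigr).
\end{equation*}
Interchanging the dummy variables $r\leftrightarrow s$ in this second double sum will show that it coincides term-for-term with the first, so both sides of the proposition equal the same common double sum.

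There is no genuine obstacle here; the only point that requires care will be the bookkeeping of the two index ranges and the observation that the sign $(-1)^{r+s}$ and the second argument $mny$ are both symmetric in $r$ and $s$, so they remain unchanged under the swap of dummy variables.
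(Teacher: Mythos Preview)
Your argument is correct and is precisely what the paper intends: it states only that Propositions~\ref{Pro 3.1}--\ref{Pro 3.4} ``can be obtained directly from Definition~\ref{stardef}'' and gives no further details, and your double unfolding via~(\ref{star}) followed by the swap $r\leftrightarrow s$ is exactly the direct verification that remark points to.
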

	 	
	\begin{proposition}\label{Pro 3.5}
		If $f(x,y)\in \mathrm{InV}^*$, then $F(x,y)=f(y-x,y)\in \mathrm{InV}^*$.  
	\end{proposition}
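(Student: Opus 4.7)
The plan is to verify the defining functional equation \eqref{star} for $F(x,y)=f(y-x,y)$ by a direct substitution argument, exploiting the fact that $n$ is odd so that $(-1)^{n-1}=1$.

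First I would write out what needs to be shown, namely
\begin{equation*}
\sum_{r=0}^{n-1}(-1)^r F(x+ry,ny)=F(x,y),
\end{equation*}
and unfold the definition of $F$ on the left side to get $F(x+ry,ny)=f(ny-x-ry,\,ny)$. The key algebraic observation is that $ny-x-ry=(y-x)+(n-1-r)y$, so the summand is $f\bigl((y-x)+(n-1-r)y,\,ny\bigr)$.

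Next I would reverse the index of summation by setting $r'=n-1-r$. Since $n$ is a positive odd integer, $n-1$ is even, and hence $(-1)^r=(-1)^{n-1-r'}=(-1)^{r'}$. The sum then becomes
\begin{equation*}
\sum_{r'=0}^{n-1}(-1)^{r'} f\bigl((y-x)+r'y,\,ny\bigr).
\end{equation*}

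Finally, since $f\in\mathrm{InV}^*$, the functional equation \eqref{star} applied with the first argument $u=y-x$ in place of $x$ yields
\begin{equation*}
\sum_{r'=0}^{n-1}(-1)^{r'}f\bigl((y-x)+r'y,\,ny\bigr)=f(y-x,y)=F(x,y),
\end{equation*}
which is what we wanted. There is no real obstacle here; the only thing to watch is that the parity identity $(-1)^{n-1}=1$ genuinely requires $n$ odd, which is precisely the hypothesis built into Definition~\ref{stardef}. So the entire argument is a one-line change of index combined with the invariance of $f$.
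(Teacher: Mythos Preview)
Your proof is correct and essentially identical to the paper's own proof: both rewrite $ny-x-ry=(y-x)+(n-1-r)y$, reindex via $r'=n-1-r$, use the oddness of $n$ to get $(-1)^r=(-1)^{r'}$, and then apply the invariance of $f$ at the point $y-x$.
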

	\begin{proof}
		For $F(x,y)=f(y-x,y)$, we have
		\begin{align*}
			\sum_{r=0}^{n-1}(-1)^r F(x+ry,ny) 
			&= \sum_{r=0}^{n-1}(-1)^r f(ny-x-ry,ny) \\
			&= \sum_{r=0}^{n-1}(-1)^r f(y-x+ny-ry-y,ny) \\
			&= \sum_{r=0}^{n-1}(-1)^r f(y-x+(n-r-1)y,ny).
		\end{align*}
		Then since $n$ is odd, we see that
		\begin{align*}    
			\sum_{r=0}^{n-1}(-1)^r F(x+ry,ny)
			&= \sum_{r=0}^{n-1}(-1)^{n-1-r} f(y-x+(n-r-1)y,ny) \\
			&= \sum_{k=0}^{n-1}(-1)^k f(y-x+ky,ny) \\
			&= f(y-x,y),
		\end{align*}
		which shows that $F(x,y)=f(y-x,y)\in \mathrm{InV}^*$.
	\end{proof}
	
	Let $[x]$ be the greatest integer not exceeding $x.$ For $x\in\mathbb R$ let $\left\{ x\right\}$ be
	the fractional part of $x.$ That is, $\left\{ x\right\}=x-[x].$
	
	\begin{proposition}\label{Pro 3.7}
		If $f(x,y) \in \mathrm{InV}^*$ and $t\in\mathbb R,$ define
		\begin{equation}\label{(2-6-1)}
			\begin{aligned}
				f_1(x,y) &= (-1)^{\left[\frac{t+x}{y}\right]} f\left(y\left\{\frac{t+x}{y}\right\}, y\right),\\
				f_2(x,y) &= (-1)^{\left[\frac{t-x}{y}\right]} f\left(y\left\{\frac{t-x}{y}\right\}, y\right),
			\end{aligned}
		\end{equation}
		then we have $f_1(x,y)$ and  $f_2(x,y) \in \mathrm{InV}^*$.
	\end{proposition}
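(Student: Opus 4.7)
The plan is to verify the defining identity (\ref{star}) directly for $f_1$ and then reduce $f_2$ to $f_1$ using Propositions~\ref{Pro 3.5} and~\ref{Pro 3.1}.

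First I would set $u = (t+x)/y$, so that the argument of $[\,\cdot\,]$ and $\{\cdot\}$ appearing in $f_1(x+ry,\,ny)$ becomes $(u+r)/n$. Writing $u = k + \theta$ with $k = [u]$ and $\theta = \{u\} \in [0,1)$, and then writing $k + r = nq + s$ with $0 \le s < n$, one verifies that $[(u+r)/n] = q$ and $\{(u+r)/n\} = (s+\theta)/n$, hence
\[
f_1(x+ry,\,ny) = (-1)^q f(sy + \theta y,\,ny).
\]
Next I would decompose $k = na + b$ with $0 \le b < n$ and split the sum $\sum_{r=0}^{n-1}(-1)^r f_1(x+ry,\,ny)$ according to whether $b + r < n$ or $b + r \ge n$. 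After a single index shift in the wraparound range and using $(-1)^n = -1$ for odd $n$, the whole sum reorganizes into
\[
(-1)^{a+b}\sum_{j=0}^{n-1}(-1)^j f(\theta y + jy,\,ny).
\]
The invariance of $f$ collapses this to $(-1)^{a+b}f(\theta y, y) = (-1)^k f(y\{u\}, y) = f_1(x,y)$, where I use $(-1)^{na+b} = (-1)^{a+b}$ since $n$ is odd.

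For $f_2$, the elementary identity $(t+y-x)/y = (t-x)/y + 1$ shifts $[\,\cdot\,]$ by $+1$ and leaves $\{\cdot\}$ unchanged, so
\[
f_1(y-x,\,y) = -f_2(x,y).
\]
Proposition~\ref{Pro 3.5} applied to $f_1 \in \mathrm{InV}^*$ gives $f_1(y-x,y) \in \mathrm{InV}^*$, and Proposition~\ref{Pro 3.1} with $a=-1$, $b=0$, $c=1$ then upgrades this to $f_2(x,y) = -f_1(y-x,y) \in \mathrm{InV}^*$.

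The main obstacle is the sign and index bookkeeping in the split $b+r < n$ versus $b+r \ge n$: the shift in the wraparound range produces a factor $(-1)^{n-b}$ that must combine correctly with the factor $(-1)^q$ coming from the floor so that the two partial sums reassemble into the single alternating sum needed to invoke the invariance hypothesis for $f$. This is precisely where the hypothesis that $n$ is odd is essential; if $n$ were even, the wraparound sign would not reconcile with the $(-1)^r$ weight and the identity would fail.
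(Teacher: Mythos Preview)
Your proposal is correct and follows essentially the same route as the paper: decompose $[\tfrac{t+x}{y}]$ modulo $n$, split the alternating sum according to whether a carry occurs, use oddness of $n$ to reconcile signs, reassemble into a single alternating sum, and invoke the invariance of $f$; then reduce $f_2$ to $f_1$ via Proposition~\ref{Pro 3.5}. One small point where you are actually more careful than the paper: you correctly compute $f_1(y-x,y)=-f_2(x,y)$ (the paper writes $f_2(x,y)=f_1(y-x,y)$, dropping the sign), and you close the gap with Proposition~\ref{Pro 3.1}.
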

	
	\begin{proof}
		 Assuming $\left[\frac{t+x}{y}\right] = kn + m$  for some $k\in\mathbb{N}_{0}$ and $m\in\{0,1,\ldots,n-1\}$,
		by Eq. (\ref{(2-6-1)}) we have
		\begin{align*}
			&\sum_{r=0}^{n-1} (-1)^r f_1(x+ry, ny) \\
			=\,& (-1)^{\left[\frac{t+x}{y}\right]} \sum_{r=0}^{n-1-m} (-1)^{\left[\frac{t+x}{y}\right]} (-1)^r (-1)^{\left[\frac{\frac{t+x}{y}+r}{n}\right]} f\left(ny\left\{\frac{\left\{\frac{t+x}{y}\right\} + \left[\frac{t+x}{y}\right] + r}{n}\right\}, ny\right) \\
			&+ (-1)^{\left[\frac{t+x}{y}\right]} \sum_{r=n-m}^{n-1} (-1)^{\left[\frac{t+x}{y}\right]} (-1)^r (-1)^{\left[\frac{\frac{t+x}{y}+r}{n}\right]} f\left(ny\left\{\frac{\left\{\frac{t+x}{y}\right\} + \left[\frac{t+x}{y}\right] + r}{n}\right\}, ny\right).
		\end{align*}
		Since $\left[\frac{t+x}{y}\right] = kn + m$ and $n$ is odd, we distinguish  the following two cases:\\
		if $0 \leq r \leq n-1-m$, then
		$$
		0 \leq \left\{\frac{t+x}{y}\right\} + r + m \leq \left\{\frac{t+x}{y}\right\} + n-1-m+m < n,
		$$
		so we get
		$$
		\left(-1\right)^{\left[\frac{\frac{t+x}{y}+r}{n}\right]} = \left(-1\right)^{\left[\frac{\left\{\frac{t+x}{y}\right\} + kn + m + r}{n}\right]} = \left(-1\right)^{\left[\frac{\left\{\frac{t+x}{y}\right\} + m + r}{n} + k\right]} = (-1)^k
		$$
		and
		$$
		\left\{\frac{\left\{\frac{t+x}{y}\right\} + \left[\frac{t+x}{y}\right] + r}{n}\right\} = \left\{\frac{\left\{\frac{t+x}{y}\right\} + kn + m + r}{n}\right\} = \frac{\left\{\frac{t+x}{y}\right\} + m + r}{n};
		$$
		if $n-m \leq r \leq n-1$, then
		$$
		n \leq \left\{\frac{t+x}{y}\right\} + n-m+m \leq \left\{\frac{t+x}{y}\right\} + r + m < 2n,
		$$
		so we get
		$$
		(-1)^{\left[\frac{\frac{t+x}{y}+r}{n}\right]} = \left(-1\right)^{\left[\frac{\left\{\frac{t+x}{y}\right\} + kn + m + r}{n}\right]} = \left(-1\right)^{\left[\frac{\left\{\frac{t+x}{y}\right\} + m + r}{n} + k\right]} = (-1)^{k+1}
		$$
		and
		$$
		\left\{\frac{\left\{\frac{t+x}{y}\right\} + \left[\frac{t+x}{y}\right] + r}{n}\right\} = \left\{\frac{\left\{\frac{t+x}{y}\right\} + kn + m + r}{n}\right\} = \frac{\left\{\frac{t+x}{y}\right\} + m + r - n}{n}.
		$$
		Therefore,
		\begin{align*}
			&\sum_{r=0}^{n-1} (-1)^r f_1(x+ry, ny) \\
			=\,& (-1)^{\left[\frac{t+x}{y}\right]} \sum_{r=0}^{n-1-m} (-1)^{kn} (-1)^{r+m} (-1)^k f\left(ny \cdot \frac{\left\{\frac{t+x}{y}\right\} + m + r}{n}, ny\right) \\
			&+ (-1)^{\left[\frac{t+x}{y}\right]} \sum_{r=n-m}^{n-1} (-1)^{kn} (-1)^{r+m} (-1)^{k+1} f\left(ny \cdot \frac{\left\{\frac{t+x}{y}\right\} + m + r - n}{n}, ny\right).
		\end{align*}
		Since $n$ is odd, we have
		$$
		(-1)^{kn} (-1)^{r+m} (-1)^k = (-1)^{k(n+1)} (-1)^{r+m} = (-1)^{r+m}
		$$
		and
		$$
		(-1)^{kn} (-1)^{r+m} (-1)^{k+1} = (-1)^{r+m+1} = (-1)^{r+m-n}.
		$$
		Thus
		\begin{align*}
			&\sum_{r=0}^{n-1} (-1)^r f_1(x+ry, ny) \\
			=\,& (-1)^{\left[\frac{t+x}{y}\right]} \sum_{r=0}^{n-1-m} (-1)^{r+m} f\left(ny \cdot \frac{\left\{\frac{t+x}{y}\right\} + m + r}{n}, ny\right) \\
			&+ (-1)^{\left[\frac{t+x}{y}\right]} \sum_{r=n-m}^{n-1} (-1)^{r+m-n} f\left(ny \cdot \frac{\left\{\frac{t+x}{y}\right\} + m + r - n}{n}, ny\right)\\
			=\,& (-1)^{\left[\frac{t+x}{y}\right]} \sum_{s=m}^{n-1} (-1)^s f\left(ny \cdot \frac{\left\{\frac{t+x}{y}\right\} + s}{n}, ny\right) \\
			&+ (-1)^{\left[\frac{t+x}{y}\right]} \sum_{s=0}^{m-1} (-1)^s f\left(ny \cdot \frac{\left\{\frac{t+x}{y}\right\} + s}{n}, ny\right) \\
			=\,& (-1)^{\left[\frac{t+x}{y}\right]} \sum_{s=0}^{n-1} (-1)^s f\left(ny \cdot \frac{\left\{\frac{t+x}{y}\right\} + s}{n}, ny\right) \\
			=\,& (-1)^{\left[\frac{t+x}{y}\right]} \sum_{s=0}^{n-1} (-1)^s f\left(y \cdot \left\{\frac{t+x}{y}\right\} + sy, ny\right).
		\end{align*}
		Since $f(x,y) \in \mathrm{InV}^*$, we have
		$$
		\sum_{r=0}^{n-1} (-1)^r f_1(x+ry, ny) = (-1)^{\left[\frac{t+x}{y}\right]} f\left(y \cdot \left\{\frac{t+x}{y}\right\}, y\right).
		$$
		Thus $f_1(x,y) \in \mathrm{InV}^*$.
		Finally, since $f_2(x,y) = f_1(y-x, y)$, by Proposition \ref{Pro 3.5}, we have $f_2(x,y) \in \mathrm{InV}^*$.
	\end{proof}
	\begin{proposition}\label{Pro 3.7}
		Let $h(x)$ be a real function, and define
		\begin{equation}\label{(2-7-1)}
			\begin{aligned}
				f(x,y) &= \frac{1}{y}\sum_{m=0}^{\infty}h\left(\frac{2m+1}{y}\right)\cos\pi\frac{(2m+1)x}{y}, \\
				g(x,y) &= \frac{1}{y}\sum_{m=0}^{\infty}h\left(\frac{2m+1}{y}\right)\sin\pi\frac{(2m+1)x}{y}.
			\end{aligned}
		\end{equation}
		Then we have $f(x,y)$ and $g(x,y)\in \mathrm{InV}^*$.  
	\end{proposition}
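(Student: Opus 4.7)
The plan is to package $f$ and $g$ into a single complex-valued function and reduce the claim to Proposition~\ref{Pro 3.3}. Set
\begin{equation*}
    F(x,y):=f(x,y)+i\,g(x,y)=\frac{1}{y}\sum_{m=0}^{\infty}h\!\left(\frac{2m+1}{y}\right)e^{i\pi(2m+1)x/y}.
\end{equation*}
If I can show $F(x,y)\in \mathrm{InV}^{*}$, then by Proposition~\ref{Pro 3.3} both the real part $f$ and the imaginary part $g$ lie in $\mathrm{InV}^{*}$, which is precisely what is to be proved. This complex repackaging converts the two mixed trigonometric series into a single exponential series on which the alternating-invariance test becomes a geometric-sum identity.

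First, I would substitute $(x,y)\mapsto (x+ry,ny)$, multiply by $(-1)^{r}$, and interchange the order of summation (treated formally, exactly as in the proof of Example~\ref{Example 5.2}) to obtain
\begin{equation*}
    \sum_{r=0}^{n-1}(-1)^{r}F(x+ry,ny)=\frac{1}{ny}\sum_{m=0}^{\infty}h\!\left(\frac{2m+1}{ny}\right)e^{i\pi(2m+1)x/(ny)}\sum_{r=0}^{n-1}w_{m}^{\,r},
\end{equation*}
where $w_{m}:=-e^{i\pi(2m+1)/n}$. The inner sum is geometric, and the decisive observation, using that both $n$ and $2m+1$ are odd, is
\begin{equation*}
    w_{m}^{\,n}=(-1)^{n}e^{i\pi(2m+1)}=(-1)^{n}\cdot(-1)^{2m+1}=(-1)^{n+1}=1,
\end{equation*}
so $\sum_{r=0}^{n-1}w_{m}^{\,r}$ vanishes whenever $w_{m}\neq 1$ and equals $n$ otherwise.

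The second step is to identify the indices $m$ for which $w_{m}=1$: this requires $e^{i\pi(2m+1)/n}=-1$, equivalently $(2m+1)/n=2k+1$ for some $k\in\mathbb{N}_{0}$, i.e.\ $m=kn+(n-1)/2$. For these indices one has $\tfrac{2m+1}{ny}=\tfrac{2k+1}{y}$ and $e^{i\pi(2m+1)x/(ny)}=e^{i\pi(2k+1)x/y}$, and the surviving factor $n$ cancels the prefactor $1/(ny)$. The sum over the surviving indices therefore reassembles exactly into $F(x,y)$, which establishes $F\in\mathrm{InV}^{*}$; taking real and imaginary parts via Proposition~\ref{Pro 3.3} completes the proof. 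The main obstacle is the parity bookkeeping in the roots-of-unity sum, but it is localized: one must simultaneously use the oddness of $n$ (to make $w_{m}^{\,n}=1$) and recognize that $w_{m}=1$ forces $(2m+1)$ to be an odd multiple of $n$, with all other indices giving complete cancellation.
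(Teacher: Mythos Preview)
Your proposal is correct and follows essentially the same route as the paper: combine $f$ and $g$ into the complex exponential series $F(x,y)=\frac{1}{y}\sum_{m\ge 0}h\bigl(\tfrac{2m+1}{y}\bigr)e^{i\pi(2m+1)x/y}$, reduce the alternating-invariance test to the inner sum $\sum_{r=0}^{n-1}(-1)^{r}e^{i\pi(2m+1)r/n}$, observe that this geometric sum equals $n$ when $n\mid 2m+1$ and $0$ otherwise, and finish via Proposition~\ref{Pro 3.3}. Your parity bookkeeping with $w_m$ and the explicit index identification $m=kn+(n-1)/2$ is slightly more detailed than the paper's presentation, but the argument is the same.
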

	\begin{proof}
		Let $$F(x,y)=\frac{1}{y}\sum_{m=0}^{\infty}h\left(\frac{2m+1}{y}\right)e^{\pi i\frac{(2m+1)x}{y}}.$$
		For any odd positive integer $n$, we have
		\begin{align*}
			\sum_{r=0}^{n-1}(-1)^r F(x+ry,ny) 
			&= \frac{1}{ny}\sum_{r=0}^{n-1}(-1)^r \sum_{m=0}^{\infty}h\left(\frac{2m+1}{ny}\right)e^{\pi i\frac{(2m+1)(x+ry)}{ny}} \\    
			&= \frac{1}{ny}\sum_{m=0}^{\infty}h\left(\frac{2m+1}{ny}\right)e^{\pi i\frac{(2m+1)x}{ny}}\sum_{r=0}^{n-1}(-1)^r e^{\pi i\frac{(2m+1)r}{n}}.
		\end{align*}
		Since
		\begin{align*}
			\sum_{r=0}^{n-1}(-1)^r e^{\pi i\frac{(2m+1)r}{n}} = 
			\begin{cases}
				0, & \textrm{if}~n\nmid 2m+1, \\
				n, & \textrm{if}~n\mid 2m+1,
			\end{cases}
		\end{align*}
		we have
		\begin{align*}
			\sum_{r=0}^{n-1}(-1)^r F(x+ry,ny) 
			&= \frac{1}{ny}\sum_{m=0}^{\infty}h\left(\frac{2m+1}{y}\right)e^{\pi i\frac{(2m+1)x}{y}}\cdot n \\
			&= \frac{1}{y}\sum_{m=0}^{\infty}h\left(\frac{2m+1}{y}\right)e^{\pi i\frac{(2m+1)x}{y}} \\
			& = F(x,y).
		\end{align*}
		Then by Euler's identity $e^{i\theta}=\cos\theta+i\sin\theta$, we have $F(x,y)=f(x,y)+ig(x,y),$ where $f(x,y)$ and $g(x,y)$
		are given in (\ref{(2-7-1)}). Finally, from Proposition \ref{Pro 3.3}, we conclude that $f(x,y)$ and $g(x,y)\in \mathrm{InV}^*$. 
	\end{proof}

	\begin{proposition}\label{Pro 3.8}
		For a real function $f(x,y)$ with $y>0$, $f(x,y)\in \mathrm{InV}^*$ if and only if the equality
		\begin{equation}\label{(3-1)}
			\begin{aligned}
				\sum_{r=0}^{n-1}(-1)^r f(x+ry,ny) = f(x,y)
			\end{aligned}
		\end{equation}
		holds for all $0<x\leq y$ and for all $x\in \mathbb{R}$, we have
		\begin{equation}\label{(3-1-2)}
			\begin{aligned}
				f(x+y,y)+f(x,y)&=\lim_{n \to \infty}f\left(x+\frac{y}{n},\frac{y}{n}\right)+f\left(x,\frac{y}{n}\right)
				\\&=\lim_{a \to 0^+}f(x+a,a)+f(x,a).
			\end{aligned}
		\end{equation}
	\end{proposition}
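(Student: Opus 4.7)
The proof splits into necessity and sufficiency; the substantive work is in the sufficiency direction, where one needs to upgrade the distribution identity from the strip $0<x\le y$ to all of $\mathbb{R}$. Throughout I will write $h(x,y):=f(x+y,y)+f(x,y)$ and
\begin{equation*}
D_n(x,y):=\sum_{r=0}^{n-1}(-1)^r f(x+ry,ny)-f(x,y),
\end{equation*}
so that $f\in\mathrm{InV}^*$ is equivalent to $D_n\equiv 0$ on $\mathbb{R}\times(0,\infty)$ for every odd $n\ge 1$.

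Necessity is direct. If $f\in\mathrm{InV}^*$, then (\ref{(3-1)}) is the content of Definition~\ref{stardef}. To derive (\ref{(3-1-2)}), I would apply (\ref{star}) with $y$ replaced by $y/n$ (odd $n\ge 1$) to get $f(x,y/n)=\sum_{r=0}^{n-1}(-1)^r f(x+ry/n,y)$, and similarly rewrite $f(x+y/n,y/n)$ as an alternating sum after reindexing $s=r+1$. Using $n$ odd (so that $(-1)^n=-1$) the middle terms cancel and one is left with $h(x,y/n)=h(x,y)$ for every odd $n\ge 1$; letting $n\to\infty$ through odd integers yields the stated limit identity.

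For sufficiency, assume (\ref{(3-1)}) on the strip $0<x\le y$ and (\ref{(3-1-2)}) on all of $\mathbb{R}$. The key observation is that the right-hand side of (\ref{(3-1-2)}) does not depend on $y$, so $h(x,y)=h(x,ny)$ for every $y>0$ and every odd $n\ge 1$. I would then establish the antisymmetry
\begin{equation*}
D_n(x+y,y)+D_n(x,y)=0
\end{equation*}
by direct computation: writing $v_r=f(x+ry,ny)$, a reindexing of $\sum_{r=0}^{n-1}(-1)^r v_{r+1}$ (with $n$ odd providing the needed sign) telescopes the combined sum down to $v_0+v_n=f(x,ny)+f(x+ny,ny)=h(x,ny)$, and subtracting $h(x,y)=f(x,y)+f(x+y,y)$ gives zero by the previous step. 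Starting from $D_n\equiv 0$ on $(0,y]$ (the hypothesis), a two-sided induction on the strips $((k-1)y,ky]$, $k\in\mathbb{Z}$, using $D_n(x+y,y)=-D_n(x,y)$, then propagates the vanishing to all of $\mathbb{R}$; hence $f\in\mathrm{InV}^*$.

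The main (still modest) obstacle is the combinatorial bookkeeping in the telescoping step that produces $h(x,ny)$: the parity hypothesis on $n$ is essential both for the identity $(-1)^n=-1$ and for the boundary terms to combine as $v_0+v_n$ rather than $v_0-v_n$, and the same trick underlies both directions of the equivalence.
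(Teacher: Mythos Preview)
Your proof is correct and follows essentially the same route as the paper: the paper's sufficiency argument derives $h(x,ny)=h(x,y)$ from (\ref{(3-1-2)}) and then computes $\sum_{r=0}^{n-1}(-1)^r f(x\pm y+ry,ny)$ directly to push (\ref{(3-1)}) one strip to the right or left, which is exactly your antisymmetry $D_n(x+y,y)+D_n(x,y)=0$ combined with induction on strips. For necessity the paper merely says ``obvious,'' so your telescoping derivation of $h(x,y/n)=h(x,y)$ for odd $n$ already goes further than the paper does.
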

	\begin{proof}
		The necessity is obvious.
		
		For sufficiency: by (\ref{(3-1-2)}), for any $0<x\leq y$ and odd $n$, we have
		\begin{align*}
			f(x+ny,ny)+f(x,ny) 
			= \lim_{n \to \infty}f\left(x+\frac{ny}{n},\frac{ny}{n}\right) + \lim_{n \to \infty}f\left(x,\frac{ny}{n}\right),
		\end{align*}
		which is equivalent to
		\begin{equation}\label{(3-2)}
			\begin{aligned}
				f(x+ny,ny)+f(x,ny)=f(x+y,y)+f(x,y).
			\end{aligned}
		\end{equation}
		Thus by (\ref{(3-1)}),
		\begin{align*}
			&\sum_{r=0}^{n-1}(-1)^r f(x+y+ry,ny) \\
			=\,& f(x+ny,ny) + f(x,ny) - \sum_{r=0}^{n-1}(-1)^r f(x+ry,ny) \\
			=\,& f(x+y,y) + f(x,y) - f(x,y) \\
			=\,& f(x+y,y),
		\end{align*}
		which shows that (\ref{(3-1)}) holds for $x>y$.
		
		From (\ref{(3-2)}) we also have
		\begin{align*}
			&\sum_{r=0}^{n-1}(-1)^r f(x-y+ry,ny) \\
			=\,& f(x-y+ny,ny) + f(x-y,ny) - \sum_{r=0}^{n-1}(-1)^r f(x+ry,ny) \\
			=\,& f(x-y+y,y) + f(x-y,y) - f(x,y) \\
			=\,& f(x-y,y),
		\end{align*}
		showing that (\ref{(3-1)}) holds for $x\leq 0$.
		
		In conclusion,  (\ref{(3-1)}) holds for all $x\in\mathbb{R}$, which means that $f(x,y)\in \mathrm{InV}^*$.
	\end{proof}
	
	\section{Convolution formulas}
	In this section, we will prove the main theorems of this paper: Theorems \ref{Theorem 1.4}, \ref{Theorem 1.5}, and \ref{Theorem 1.6}.
	
	\subsection*{Proof of Theorem \ref{Theorem 1.4}}
	Define the functions
	\begin{align*}
		&f(x,y) = g \ast h(x,y), \\
		&h_1(x,y) = h(y - x, y)
	\end{align*}
	and  \begin{equation}\label{(1-7-1)}
		f_1(x,y) = \int_{0}^{y} (-1)^{\left[\frac{t+x}{y}\right]} h_1\left(y\left\{\frac{t+x}{y}\right\}, y\right) g(t,y) \, dt.
	\end{equation}
	Then we have
	\begin{align*}
		&\sum_{r=0}^{n-1} (-1)^r f_1(x + ry, ny) \\
		=\,& \int_{0}^{ny} \sum_{r=0}^{n-1} (-1)^r (-1)^{\left[\frac{t + x + ry}{ny}\right]} h_1\left(ny\left\{\frac{t + x + ry}{ny}\right\}, y\right) g(t, ny) \, dt.
	\end{align*}
	By Proposition \ref{Pro 3.5}, we know that $h_1(x,y) \in \mathrm{InV}^*$. Thus, from the above equation, we get
	\begin{align*}
		&\sum_{r=0}^{n-1} (-1)^r f_1(x + ry, ny) \\
		=\,& \int_{0}^{ny} (-1)^{\left[\frac{t + x}{y}\right]} h_1\left(y\left\{\frac{t + x}{y}\right\}, y\right) g(t, ny) \, dt \\
		=\,& \sum_{r=0}^{n-1} \int_{ry}^{(r+1)y} (-1)^{\left[\frac{t + x}{y}\right]} h_1\left(y\left\{\frac{t + x}{y}\right\}, y\right) g(t, ny) \, dt.
	\end{align*}
	Then by substituting $u = t - ry$, we obtain
	\begin{align*}
		&\sum_{r=0}^{n-1} (-1)^r f_1(x + ry, ny) \\
		=\,& \sum_{r=0}^{n-1} \int_{0}^{y} (-1)^{\left[\frac{x + u + ry}{y}\right]} h_1\left(y\left\{\frac{x + u + ry}{y}\right\}, y\right) g(u + ry, ny) \, du \\
		=\,& \sum_{r=0}^{n-1} \int_{0}^{y} (-1)^r (-1)^{\left[\frac{x + u}{y}\right]} h_1\left(y\left\{\frac{x + u}{y}\right\}, y\right) g(u + ry, ny) \, du.
	\end{align*}
	Since $g(x,y) \in \mathrm{InV}^*$, the above expression simplifies to
	\begin{align*}
		&\sum_{r=0}^{n-1} (-1)^r f_1(x + ry, ny) \\
		=\,& \int_{0}^{y} (-1)^{\left[\frac{x + u}{y}\right]} h_1\left(y\left\{\frac{x + u}{y}\right\}, y\right) g(u, y) \, du \\
		=\,& f_1(x, y),
	\end{align*}
	which implies $f_1(x,y) \in \mathrm{InV}^*$. \\
	
	For $0 < x \leq y$, by (\ref{(1-7-1)}) we have
	\begin{align*}
		f_1(x,y) =& \int_{0}^{y} (-1)^{\left[\frac{t + x}{y}\right]} h_1\left(y\left\{\frac{t + x}{y}\right\}, y\right) g(t,y) \, dt \\
		=& \int_{0}^{y - x} (-1)^{\left[\frac{t + x}{y}\right]} h_1\left(y\left\{\frac{t + x}{y}\right\}, y\right) g(t,y) \, dt \\
		&+ \int_{y - x}^{y} (-1)^{\left[\frac{t + x}{y}\right]} h_1\left(y\left\{\frac{t + x}{y}\right\}, y\right) g(t,y) \, dt \\
		=& \int_{0}^{y - x} h_1(t + x, y) g(t,y) \, dt - \int_{y - x}^{y} h_1(t + x - y, y) g(t,y) \, dt \\
		=& \int_{0}^{y - x} h(y - x - t, y) g(t,y) \, dt - \int_{y - x}^{y} h(2y - x - t, y) g(t,y) \, dt \\
		=& f(y - x, y),
	\end{align*}
	i.e., $f_1(x,y) = f(y - x, y)$. Then, by Proposition \ref{Pro 3.5}, for $0 < x \leq y$ and $n$ odd, we have the equality
	\begin{equation}\label{(4-1)}
		\begin{aligned}
			\sum_{r=0}^{n-1} (-1)^r f(x + ry, ny) = f(x, y).
		\end{aligned}
	\end{equation}
	Next, we prove that for any $x \in \mathbb{R}$ and $y > 0$, $f(x,y)$ also satisfies the above equation. \\
	
	Let $\bar{f}(x,y) = f(y + x, y) + f(x, y)$. By the convolution formula (\ref{(1-10)}), we have
	\begin{align*}
		\bar{f}(x,y) =& \int_{0}^{x + y} g(t,y) h(x + y - t, y) \, dt - \int_{x + y}^{y} g(t,y) h(x + 2y - t, y) \, dt \\
		&+ \int_{0}^{x} g(t,y) h(x - t, y) \, dt - \int_{x}^{y} g(t,y) h(x + y - t, y) \, dt \\
		=& \int_{y}^{x + y} g(t,y) h(x + y - t, y) \, dt + \int_{y}^{x + y} g(t,y) h(x + 2y - t, y) \, dt \\
		&+ \int_{0}^{y} g(t,y) h(x + y - t, y) \, dt + \int_{y}^{x} g(t,y) h(x + y - t, y) \, dt \\
		&+ \int_{0}^{x} g(t,y) h(x - t, y) \, dt \\
		=& \int_{y}^{x + y} g(t,y) \left[h(x + y - t, y) + h(x + 2y - t, y)\right] \, dt \\
		&+ \int_{0}^{x} g(t,y) \left[h(x + y - t, y) + h(x - t, y)\right] \, dt,
	\end{align*}
	then by substituting $t \to t - y$, we obtain
	\begin{equation}\label{(4-2)}
		\begin{aligned}
			\bar{f}(x,y) =& \int_{0}^{x} g(t + y, y) \left[h(x - t, y) + h(x + y - t, y)\right] \, dt \\
			&+ \int_{0}^{x} g(t, y) \left[h(x + y - t, y) + h(x - t, y)\right] \, dt \\
			=& \int_{0}^{x} \bar{g}(t,y) \bar{h}(x - t, y) \, dt.
		\end{aligned}
	\end{equation}
	Let $\bar{F}(x,y) = F(y + x, y) + F(x, y)$. Then, for any $F(x,y) \in \mathrm{InV}^*$, we have
	\begin{align*}
		\bar{F}(x, ny) &= F(ny + x, ny) + F(x, ny) \\
		&= \sum_{r=0}^{n-1} (-1)^r F(x + y + ry, ny) + \sum_{r=0}^{n-1} (-1)^r F(x + ry, ny) \\
		&= F(x + y, y) + F(x, y) \\
		&= \bar{F}(x, y).
	\end{align*}
	Since $g(x,y), h(x,y) \in \mathrm{InV}^*$, from the above and Eq. (\ref{(4-2)}), we have
	\begin{equation}\label{(4-3)}
		\begin{aligned}
			\bar{f}(x, ny) = \int_{0}^{x} \bar{g}(t, ny) \bar{h}(x - t, ny) \, dt = \int_{0}^{x} \bar{g}(t, y) \bar{h}(x - t, y) \, dt = \bar{f}(x, y).
		\end{aligned}
	\end{equation}
	Note that for $0 < x \leq y$, Eq. (\ref{(4-1)}) holds. Thus, using Eqs. (\ref{(4-1)}) and (\ref{(4-3)}), we obtain
	\begin{align*}
		&\sum_{r=0}^{n-1} (-1)^r f(x + y + ry, ny) \\
		=\,& -\sum_{r=0}^{n-1} (-1)^r f(x + ry, ny) + f(x, ny) + f(x + ny, ny) \\
		=\,& -f(x, y) + \bar{f}(x, ny) \\
		=\,& -f(x, y) + \bar{f}(x, y) \\
		=\,& -f(x, y) + f(x, y) + f(x + y, y) \\
		=\,& f(x + y, y),
	\end{align*}
	i.e., for $x > y$, Eq. (\ref{(4-1)}) also holds. \\
	
	Finally, by Eqs. (\ref{(4-1)}) and (\ref{(4-3)}), we have
	\begin{align*}
		&\sum_{r=0}^{n-1} (-1)^r f(x - y + ry, ny) \\
		=\,& -\sum_{r=0}^{n-1} (-1)^r f(x + ry, ny) + f(x + (n-1)y, ny) + f(x - y, ny) \\
		=\,& -f(x, y) + \bar{f}(x - y, ny) \\
		=\,& -f(x, y) + \bar{f}(x - y, y) \\
		=\,& -f(x, y) + f(x, y) + f(x - y, y) \\
		=\,& f(x - y, y),
	\end{align*}
	i.e., for $x \leq 0$, Eq. (\ref{(4-1)}) also holds. 
	In conclusion, for any $x \in \mathbb{R}$, Eq. (\ref{(4-1)}) holds, i.e., $f(x,y) \in \mathrm{InV}^*$.

	\subsection*{Proof of Theorem \ref{Theorem 1.5}}
	Next, we present the proof of Theorem \ref{Theorem 1.5}, which requires the following lemmas.
	\begin{lemma}\label{Pro 2.12}\textup{(Reflection equation for Euler polynomials, see \cite[Theorem 1.2]{2})} 
		Let $n \in \mathbb{N},$ we have
		\begin{align*}
			\sum_{k=0}^{n} \binom{n}{k} E_k(x) + E_n(x) = 2 x^n.
		\end{align*}
		In particular, we have
		\begin{align*}
			E_n(1-x) = (-1)^n E_n(x).
		\end{align*}
	\end{lemma}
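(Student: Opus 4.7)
The plan is to work directly with the defining generating function
$$G(x,t) := \frac{2e^{xt}}{e^t+1} = \sum_{k=0}^{\infty} E_k(x) \frac{t^k}{k!},$$
which is the most efficient route for identities of this binomial-convolution type. I would split the statement into two pieces and treat each with a one-line manipulation of $G$.

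For the first identity, I would first establish the addition formula $E_n(x+1) = \sum_{k=0}^{n}\binom{n}{k} E_k(x)$ by noting that $G(x+1,t) = e^t G(x,t)$ and equating coefficients of $t^n/n!$ on both sides (the right-hand side being a Cauchy product of $e^t = \sum t^k/k!$ with the series for $G(x,t)$). Next I would obtain the companion relation $E_n(x+1) + E_n(x) = 2x^n$ from the elementary identity
$$G(x+1,t) + G(x,t) = \frac{2e^{xt}(e^t+1)}{e^t+1} = 2 e^{xt} = \sum_{k=0}^{\infty} 2 x^k \frac{t^k}{k!}.$$
Combining the two gives the first claim:
$$\sum_{k=0}^{n}\binom{n}{k} E_k(x) + E_n(x) = E_n(x+1) + E_n(x) = 2x^n.$$

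For the reflection formula I would compute
$$G(1-x,t) = \frac{2 e^{(1-x)t}}{e^t+1} = \frac{2 e^{-xt}}{e^{-t}+1} = G(x,-t),$$
where the middle step is obtained by multiplying numerator and denominator by $e^{-t}$. Expanding both sides as power series in $t$ and matching coefficients of $t^n/n!$ immediately yields $E_n(1-x) = (-1)^n E_n(x)$.

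I do not anticipate a serious obstacle: the whole argument reduces to short generating-function manipulations, and the statement is flagged as already appearing in \cite[Theorem 1.2]{2}. The only mildly fiddly point is keeping the binomial convolution correct in the Cauchy product for $e^t G(x,t)$; once that bookkeeping is done, both pieces of the lemma drop out in essentially one line each.
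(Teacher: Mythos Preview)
Your argument is correct: both identities follow from the short generating-function manipulations you describe, and the bookkeeping in the Cauchy product for $e^t G(x,t)$ is straightforward. The paper does not supply its own proof of this lemma at all; it simply quotes the result from \cite[Theorem 1.2]{2}, so there is nothing to compare against and your self-contained proof is a welcome addition.
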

	
	\begin{lemma}[{See \cite[p. 31]{MOS}}]\label{Lemma 4.1}
		For $m, n \in \mathbb{N}$, we have
		\begin{align*}
			&\int_{0}^{1} E_m(t) E_n(1 - t) \, dt = (-1)^n \int_{0}^{1} E_m(t) E_n(t) \, dt = \frac{2 E_{m+n+1}(1)}{\binom{m+n}{n} (m + n + 1)}.
		\end{align*}
	\end{lemma}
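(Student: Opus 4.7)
The first equality in the lemma is an immediate application of the reflection identity $E_n(1-t)=(-1)^n E_n(t)$ from Lemma~\ref{Pro 2.12}, so the real task is to evaluate $I(m,n) := \int_0^1 E_m(t) E_n(t)\,dt$ in closed form. The plan is to do this by integration by parts, exploiting the derivative rule $E_k'(t) = k\,E_{k-1}(t)$, which follows directly from differentiating the Euler polynomial generating function with respect to $x$.

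Writing $E_n(t) = \tfrac{1}{n+1}\,E_{n+1}'(t)$ and integrating by parts gives
\[
I(m,n) = \frac{1}{n+1}\bigl[E_m(t) E_{n+1}(t)\bigr]_0^1 - \frac{m}{n+1}\, I(m-1, n+1).
\]
The boundary term vanishes whenever $m\ge 1$: specializing the difference equation $E_k(x+1)+E_k(x)=2x^k$ at $x=0$ yields $E_k(1)=-E_k(0)$ for every $k\ge 1$, and hence $E_m(1)E_{n+1}(1) = E_m(0)E_{n+1}(0)$. The reduction $I(m,n) = -\tfrac{m}{n+1}\,I(m-1,n+1)$ can therefore be iterated $m$ times to telescope into $I(m,n) = (-1)^m \tfrac{m!\,n!}{(m+n)!}\,I(0,m+n)$.

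Computing the base case directly, $I(0,m+n) = \tfrac{1}{m+n+1}\bigl[E_{m+n+1}(1)-E_{m+n+1}(0)\bigr] = \tfrac{2\,E_{m+n+1}(1)}{m+n+1}$, so one arrives at
\[
I(m,n) = \frac{(-1)^m\cdot 2\,E_{m+n+1}(1)}{(m+n+1)\binom{m+n}{n}}.
\]
Multiplying by $(-1)^n$ produces an overall sign of $(-1)^{m+n}$, and the final step is the parity check: when $m+n$ is even this sign is $+1$; when $m+n$ is odd the index $m+n+1$ is an even integer $\ge 2$, forcing $E_{m+n+1}(1)=0$ so the identity still holds. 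The only real point of care is this parity bookkeeping; everything else is routine manipulation, and an alternative route via the bivariate generating function $\tfrac{4(e^u-e^v)}{(e^u+1)(e^v+1)(u-v)}$ would yield the same conclusion by expanding $\tfrac{2}{e^t+1}=\sum_k E_k(0)\tfrac{t^k}{k!}$ and reading off coefficients.
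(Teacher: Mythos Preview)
Your argument is correct. The reflection identity handles the first equality, the integration-by-parts reduction $I(m,n)=-\tfrac{m}{n+1}I(m-1,n+1)$ is justified because the boundary term $E_m(1)E_{n+1}(1)-E_m(0)E_{n+1}(0)$ vanishes via $E_k(1)=-E_k(0)$ for $k\ge 1$, and the telescoping to $I(0,m+n)$ together with the base evaluation is clean. The parity check at the end is the right way to dispose of the sign $(-1)^{m+n}$: combining $E_k(1)=-E_k(0)$ with the reflection $E_k(1)=(-1)^kE_k(0)$ forces $E_k(1)=0$ for every even $k\ge 2$, and since $m,n\ge 1$ the index $m+n+1$ is at least $4$ in the odd-$m+n$ case.

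There is nothing in the paper to compare against: Lemma~\ref{Lemma 4.1} is stated with a bare citation to \cite[p.~31]{MOS} and no proof is supplied. Your integration-by-parts derivation is the standard one and would be an appropriate self-contained replacement for that citation.
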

	 	
	Now we are at the position to prove Theorem  \ref{Theorem 1.5}.
	
	\begin{proof}[Proof of Theorem \ref{Theorem 1.5}]
		We first prove that the Euler polynomials satisfy the following identity:
		\begin{equation}\label{(1-5-1)}
			\begin{aligned}
				E_{m + n + 1}(x) = \frac{\binom{m + n}{m} (m + n + 1)}{2} \left( \int_{0}^{1} E_m(x - t) E_n(t) \, dt - 2 \int_{x}^{1} (x - t)^m E_n(t) \, dt \right).
			\end{aligned}
		\end{equation}
		To this end, we use the induction method for $m\in\mathbb{N}$. For $m = 1$, differentiating the right-hand side of the above equation gives
		\begin{align*}
			&\frac{d}{dx} \left( \int_{0}^{1} E_1(x - t) E_n(t) \, dt - 2 \int_{x}^{1} (x - t) E_n(t) \, dt \right) \\
			=\,& \int_{0}^{1} E_n(t) \, dt - 2 \int_{x}^{1} E_n(t) \, dt \\
			=\,& \frac{2 E_{n + 1}(1)}{n + 1} - 2 \left( \frac{E_{n + 1}(1) - E_{n + 1}(x)}{n + 1} \right) \\
			=\,& \frac{2 E_{n + 1}(x)}{n + 1} \\
			=\,& \frac{d}{dx} \left( \frac{2 E_{n + 2}(x)}{(n + 1)(n + 2)} \right).
		\end{align*}
		And by Lemma \ref{Lemma 4.1}, at $x=1$ we have
		\begin{align*}
			\int_{0}^{1} E_1(1 - t) E_n(t) \, dt = \frac{2 E_{n + 2}(1)}{\binom{1 + n}{n} (n + 2)} = \frac{2 E_{n + 2}(1)}{(n + 1)(n + 2)},
		\end{align*}
		and thus
		\begin{align*}
			\int_{0}^{1} E_1(x - t) E_n(t) \, dt - 2 \int_{x}^{1} (x - t) E_n(t) \, dt = \frac{2 E_{n + 2}(x)}{(n + 1)(n + 2)}.
		\end{align*}
		This shows that the equation holds for $m = 1$. 
		Assume that the equation holds for $m = k$. We now prove the case for $m = k + 1$. Differentiating the right-hand side of Eq. (\ref{(1-5-1)}) gives
		\begin{align*}
			&\frac{d}{dx} \left( \int_{0}^{1} E_{k + 1}(x - t) E_n(t) \, dt - 2 \int_{x}^{1} (x - t)^{k + 1} E_n(t) \, dt \right) \\
			=\,& (k + 1) \left( \int_{0}^{1} E_k(x - t) E_n(t) \, dt - 2 \int_{x}^{1} (x - t)^k E_n(t) \, dt \right) \\
			=\,& (k + 1) \frac{2 E_{k + n + 1}(x)}{\binom{k + n}{k} (k + n + 1)} \\
			=\,& \frac{2 E_{k + n + 1}(x)}{\binom{k + n + 1}{k + 1}} \\
			=&\, \frac{d}{dx} \left( \frac{2 E_{k + n + 2}(x)}{\binom{k + n + 1}{k + 1} (k + n + 2)} \right).
		\end{align*}
		And by Lemma \ref{Lemma 4.1}, at $x=1$ we have
		\begin{align*}
			\int_{0}^{1} E_{k + 1}(1 - t) E_n(t) \, dt = \frac{2 E_{k + n + 2}(1)}{\binom{k + n + 1}{k + 1} (k + n + 2)},
		\end{align*}
		thus
		\begin{align*}
			\int_{0}^{1} E_{k + 1}(x - t) E_n(t) \, dt - 2 \int_{x}^{1} (x - t)^{k + 1} E_n(t) \, dt = \frac{2 E_{k + n + 2}(x)}{\binom{k + n + 1}{k + 1} (k + n + 2)}.
		\end{align*}
		By induction, Eq. (\ref{(1-5-1)}) holds for all positive integers $m$. 
		Next, we prove the convolution identity:
		\begin{align*}
			\frac{y^m E_m\left(\frac{x}{y}\right)}{2 m!} \ast \frac{y^n E_n\left(\frac{x}{y}\right)}{2 n!} = \frac{y^{m + n + 1} E_{m + n + 1}\left(\frac{x}{y}\right)}{2 (m + n + 1)!}.
		\end{align*}
		By the convolution formula (\ref{(1-10)}), we have
		\begin{align*}
			&\frac{y^m E_m\left(\frac{x}{y}\right)}{2 m!} \ast \frac{y^n E_n\left(\frac{x}{y}\right)}{2 n!} \\
			=\,& \int_{0}^{x} \frac{y^m E_m\left(\frac{t}{y}\right)}{2 m!} \cdot \frac{y^n E_n\left(\frac{x - t}{y}\right)}{2 n!} \, dt - \int_{x}^{y} \frac{y^m E_m\left(\frac{t}{y}\right)}{2 m!} \cdot \frac{y^n E_n\left(\frac{x + y - t}{y}\right)}{2 n!} \, dt \\
			=\,& \frac{y^{m + n}}{4 m! n!} \left( \int_{0}^{x} E_m\left(\frac{t}{y}\right) E_n\left(\frac{x - t}{y}\right) \, dt - \int_{x}^{y} E_m\left(\frac{t}{y}\right) E_n\left(\frac{x + y - t}{y}\right) \, dt \right) \\
			=\,& \frac{y^{m + n}}{4 m! n!} \left( \int_{0}^{y} E_m\left(\frac{t}{y}\right) E_n\left(\frac{x - t}{y}\right) \, dt - \int_{x}^{y} E_m\left(\frac{t}{y}\right) E_n\left(\frac{x - t}{y}\right) \, dt \right) \\
			&- \frac{y^{m + n}}{4 m! n!} \left( \int_{x}^{y} E_m\left(\frac{t}{y}\right) E_n\left(\frac{x + y - t}{y}\right) \, dt \right) \\
			=\,& \frac{y^{m + n}}{4 m! n!} \left( \int_{0}^{y} E_m\left(\frac{t}{y}\right) E_n\left(\frac{x - t}{y}\right) \, dt \right) \\
			&- \frac{y^{m + n}}{4 m! n!} \left( \int_{x}^{y} E_m\left(\frac{t}{y}\right) \left( E_n\left(\frac{x - t}{y}\right) + E_n\left(\frac{x + y - t}{y}\right) \right) \, dt \right).
		\end{align*}
		Then by the difference equation $E_n(x + 1) + E_n(x) = 2x^n$, we further have
		\begin{align*}
			&\frac{y^m E_m\left(\frac{x}{y}\right)}{2 m!} \ast \frac{y^n E_n\left(\frac{x}{y}\right)}{2 n!} \\
			=\,& \frac{y^{m + n}}{4 m! n!} \left( \int_{0}^{y} E_m\left(\frac{t}{y}\right) E_n\left(\frac{x - t}{y}\right) \, dt - 2 \int_{x}^{y} E_m\left(\frac{t}{y}\right) \left(\frac{x - t}{y}\right)^n \, dt \right).
		\end{align*}
		Finally, by substituting $u = \frac{t}{y}$, we obtain
		\begin{align*}
			&\frac{y^m E_m\left(\frac{x}{y}\right)}{2 m!} \ast \frac{y^n E_n\left(\frac{x}{y}\right)}{2 n!} \\
			=\,& \frac{y^{m + n}}{4 m! n!} \left( y \int_{0}^{1} E_m(u) E_n\left(\frac{x}{y} - u\right) \, du - 2 y \int_{\frac{x}{y}}^{1} E_m(u) \left(\frac{x}{y} - u\right)^n \, du \right) \\
			=\,& \frac{y^{m + n + 1}}{4 m! n!} \cdot E_{m + n + 1}\left(\frac{x}{y}\right) \cdot \frac{2}{\binom{m + n}{n} (m + n + 1)} \\
			=\,& \frac{y^{m + n + 1} E_{m + n + 1}\left(\frac{x}{y}\right)}{2 (m + n + 1)!}.
		\end{align*}
		This completes the proof.
	\end{proof}
	
	\subsection*{Proof of Theorem \ref{Theorem 1.6}}
		Let
		$f(x,y)=\frac{y^{\alpha-1}\zeta_E\left(1-\alpha,\frac{x}{y}\right)}{\Gamma(\alpha)}\ast \frac{y^{\beta-1}\zeta_E\left(1-\beta,\frac{x}{y}\right)}{\Gamma(\beta)}.$
		For real numbers $\alpha,\beta>1$, according to the convolution formula (\ref{(1-10)}), we have
		\begin{align*}
			f(x,y)=&\int_{0}^{x}\frac{y^{\alpha-1}\zeta_E\left(1-\alpha,\frac{t}{y}\right)}{\Gamma(\alpha)}\cdot\frac{y^{\beta-1}\zeta_E\left(1-\beta,\frac{x-t}{y}\right)}{\Gamma(\beta)}dt\\
			&-\int_{x}^{y}\frac{y^{\alpha-1}\zeta_E\left(1-\alpha,\frac{t}{y}\right)}{\Gamma(\alpha)}\cdot\frac{y^{\beta-1}\zeta_E\left(1-\beta,1+\frac{x-t}{y}\right)}{\Gamma(\beta)}dt.
		\end{align*}
		Through interval partitioning, we derive
		\begin{align*}
			f(x+y,y)+f(x,y)
			&=\int_{0}^{x+y}\frac{y^{\alpha-1}\zeta_E\left(1-\alpha,\frac{t}{y}\right)}{\Gamma(\alpha)}\cdot\frac{y^{\beta-1}\zeta_E\left(1-\beta,1+\frac{x-t}{y}\right)}{\Gamma(\beta)}dt\\
			&\quad-\int_{x+y}^{y}\frac{y^{\alpha-1}\zeta_E\left(1-\alpha,\frac{t}{y}\right)}{\Gamma(\alpha)}\cdot\frac{y^{\beta-1}\zeta_E\left(1-\beta,2+\frac{x-t}{y}\right)}{\Gamma(\beta)}dt\\
			&\quad+\int_{0}^{x}\frac{y^{\alpha-1}\zeta_E\left(1-\alpha,\frac{t}{y}\right)}{\Gamma(\alpha)}\cdot\frac{y^{\beta-1}\zeta_E\left(1-\beta,\frac{x-t}{y}\right)}{\Gamma(\beta)}dt\\
			&\quad-\int_{x}^{y}\frac{y^{\alpha-1}\zeta_E\left(1-\alpha,\frac{t}{y}\right)}{\Gamma(\alpha)}\cdot\frac{y^{\beta-1}\zeta_E\left(1-\beta,1+\frac{x-t}{y}\right)}{\Gamma(\beta)}dt\\
			&=\int_{0}^{x}\frac{y^{\alpha-1}\zeta_E\left(1-\alpha,\frac{t}{y}\right)}{\Gamma(\alpha)}\cdot\frac{y^{\beta-1}\zeta_E\left(1-\beta,1+\frac{x-t}{y}\right)}{\Gamma(\beta)}dt\\
			&\quad+\int_{0}^{x}\frac{y^{\alpha-1}\zeta_E\left(1-\alpha,\frac{t}{y}\right)}{\Gamma(\alpha)}\cdot\frac{y^{\beta-1}\zeta_E\left(1-\beta,\frac{x-t}{y}\right)}{\Gamma(\beta)}dt\\
			&\quad+\int_{y}^{x+y}\frac{y^{\alpha-1}\zeta_E\left(1-\alpha,\frac{t}{y}\right)}{\Gamma(\alpha)}\cdot\frac{y^{\beta-1}\zeta_E\left(1-\beta,1+\frac{x-t}{y}\right)}{\Gamma(\beta)}dt\\
			&\quad+\int_{y}^{x+y}\frac{y^{\alpha-1}\zeta_E\left(1-\alpha,\frac{t}{y}\right)}{\Gamma(\alpha)}\cdot\frac{y^{\beta-1}\zeta_E\left(1-\beta,2+\frac{x-t}{y}\right)}{\Gamma(\beta)}dt.
		\end{align*}
		Using the difference equation
		$$\zeta_E(s,1+x)+\zeta_E(s,x)=x^{-s},$$
		we can simplify the above expression to
		\begin{align*}
			f(x+y,y)+f(x,y)
			=&\int_{y}^{x+y}\frac{y^{\alpha-1}\zeta_E\left(1-\alpha,\frac{t}{y}\right)}{\Gamma(\alpha)}\cdot \frac{y^{\beta-1}}{\Gamma(\beta)\cdot\left(1+\frac{x-t}{y}\right)^{1-\beta}} dt\\
			&+\int_{0}^{x}\frac{y^{\alpha-1}\zeta_E\left(1-\alpha,\frac{t}{y}\right)}{\Gamma(\alpha)}\cdot \frac{1}{\Gamma(\beta)\left(x-t\right)^{1-\beta}} dt.
		\end{align*}
		Then  
		\begin{align*}
			f(x+y,y)+f(x,y)
			&= \int_{0}^{x}\frac{y^{\alpha-1}\zeta_E\left(1-\alpha,1+\frac{t}{y}\right)}{\Gamma(\alpha)} \cdot \frac{1}{\Gamma(\beta)(x-t)^{1-\beta}} dt \\
			&\quad (\text{letting}~ t\to t+y)\\
			&\quad + \int_{0}^{x}\frac{y^{\alpha-1}\zeta_E\left(1-\alpha,\frac{t}{y}\right)}{\Gamma(\alpha)} \cdot \frac{1}{\Gamma(\beta)(x-t)^{1-\beta}} dt \\
			&= \int_{0}^{x} \frac{y^{\alpha-1}}{\Gamma(\alpha)\Gamma(\beta)(x-t)^{1-\beta}} \left[\zeta_E\left(1-\alpha,1+\tfrac{t}{y}\right) + \zeta_E\left(1-\alpha,\tfrac{t}{y}\right)\right] dt \\
			&= \int_{0}^{x} \frac{y^{\alpha-1}}{\Gamma(\alpha)\Gamma(\beta)(x-t)^{1-\beta}} \cdot \left(\frac{t}{y}\right)^{\alpha-1} dt\\
			& \quad (\text{by}~\zeta_E(1-\alpha,a+1)+\zeta_E(1-\alpha,a) = a^{\alpha-1})\\
			&= \int_{0}^{x} \frac{1}{\Gamma(\alpha)\Gamma(\beta)} \cdot \frac{t^{\alpha-1}}{(x-t)^{1-\beta}} dt\\
			&\quad (\text{deleting the term}~y^{\alpha-1})\\
			&= \frac{x^{\alpha+\beta-1}}{\Gamma(\alpha)\Gamma(\beta)} \int_{0}^{1} s^{\alpha-1}(1-s)^{\beta-1} ds\\
			&\quad (\text{letting}~ t=xs)\\
			&= \frac{x^{\alpha+\beta-1}}{\Gamma(\alpha+\beta)}\\
			&\quad \left(\text{since}~B(\alpha,\beta) = \frac{\Gamma(\alpha)\Gamma(\beta)}{\Gamma(\alpha+\beta)}\right)\\
			&= \frac{y^{\alpha+\beta-1}}{\Gamma(\alpha+\beta)} \left[\zeta_E\left(1-\alpha-\beta,1+\tfrac{x}{y}\right) + \zeta_E\left(1-\alpha-\beta,\tfrac{x}{y}\right)\right]\\
			& \quad \left(\text{by}~\zeta_E\left(1-\alpha-\beta,1+\tfrac{x}{y}\right)+\zeta_E\left(1-\alpha-\beta,\tfrac{x}{y}\right) = \left(\tfrac{y}{x}\right)^{\alpha+\beta-1}\right).
		\end{align*}
		Thus for any $x\in\mathbb{R}$ we have
		\begin{equation}\label{(4-4)}
			\begin{aligned}
				f(x+y,y)+f(x,y)=\frac{y^{\alpha+\beta-1}}{\Gamma(\alpha+\beta)}\left(\zeta_E\left(1-\alpha-\beta,1+\frac{x}{y}\right)+\zeta_E\left(1-\alpha-\beta,\frac{x}{y}\right)\right).
			\end{aligned}
		\end{equation}
		From Euler's identity $e^{i\theta}=\cos\theta+i\sin\theta$, for $s>1$ and $0<x\leq y$, (\ref{zetaFour}) can be written as
		\begin{equation}\label{(1-6-1)}
			\begin{aligned}
				&y^{s-1}\zeta_E\left(1-s,\frac{x}{y}\right)\\
				=&\frac{\Gamma(s)}{\pi^s y}\left(e^{-\frac{is\pi}{2}}\sum_{m=0}^{\infty}\left(\frac{2m+1}{y}\right)^{-s}e^{\pi i(2m+1)\frac{x}{y}}+e^{\frac{is\pi}{2}}\sum_{m=0}^{\infty}\left(\frac{2m+1}{y}\right)^{-s}e^{-\pi i(2m+1)\frac{x}{y}}\right).
			\end{aligned}
		\end{equation}
		Thus for $0<x\leq y$, we obtain the Fourier expansion of $f(x,y)$:
		\begin{align*}
			f(x,y)
			&=\frac{1}{\pi^{\alpha+\beta}y^2}\\
			&\quad\times\int_{0}^{x}\left(e^{-\frac{i\alpha\pi}{2}}\sum_{m=0}^{\infty}\left(\frac{2m+1}{y}\right)^{-\alpha}e^{\pi i(2m+1)\frac{t}{y}}+e^{\frac{i\alpha\pi}{2}}\sum_{m=0}^{\infty}\left(\frac{2m+1}{y}\right)^{-\alpha}e^{-\pi i(2m+1)\frac{t}{y}}\right)\\
			&\quad\times\left(e^{-\frac{i\beta\pi}{2}}\sum_{n=0}^{\infty}\left(\frac{2n+1}{y}\right)^{-\beta}e^{\pi i(2n+1)\frac{x-t}{y}}+e^{\frac{i\beta\pi}{2}}\sum_{n=0}^{\infty}\left(\frac{2n+1}{y}\right)^{-\beta}e^{-\pi i(2n+1)\frac{x-t}{y}}\right)dt\\
			&\quad-\frac{1}{\pi^{\alpha+\beta}y^2}\\
			&\quad\times\int_{x}^{y}\left(e^{-\frac{i\alpha\pi}{2}}\sum_{m=0}^{\infty}\left(\frac{2m+1}{y}\right)^{-\alpha}e^{\pi i(2m+1)\frac{t}{y}}+e^{\frac{i\alpha\pi}{2}}\sum_{m=0}^{\infty}\left(\frac{2m+1}{y}\right)^{-\alpha}e^{-\pi i(2m+1)\frac{t}{y}}\right)\\
			&\quad\times\left(e^{-\frac{i\beta\pi}{2}}\sum_{n=0}^{\infty}\left(\frac{2n+1}{y}\right)^{-\beta}e^{\pi i(2n+1)\frac{x+y-t}{y}}+e^{\frac{i\beta\pi}{2}}\sum_{n=0}^{\infty}\left(\frac{2n+1}{y}\right)^{-\beta}e^{-\pi i(2n+1)\frac{x+y-t}{y}}\right)dt.
		\end{align*}
		Since $e^{\pi i(2m+1)}=-1$, we have
		\begin{align*}
			f(x,y)=&\frac{1}{\pi^{\alpha+\beta}y^2}\\
			&\times\int_{0}^{y}\left(e^{-\frac{i\alpha\pi}{2}}\sum_{m=0}^{\infty}\left(\frac{2m+1}{y}\right)^{-\alpha}e^{\pi i(2m+1)\frac{t}{y}}+e^{\frac{i\alpha\pi}{2}}\sum_{m=0}^{\infty}\left(\frac{2m+1}{y}\right)^{-\alpha}e^{-\pi i(2m+1)\frac{t}{y}}\right)\\
			&\times\left(e^{-\frac{i\beta\pi}{2}}\sum_{n=0}^{\infty}\left(\frac{2n+1}{y}\right)^{-\beta}e^{\pi i(2n+1)\frac{x-t}{y}}+e^{\frac{i\beta\pi}{2}}\sum_{n=0}^{\infty}\left(\frac{2n+1}{y}\right)^{-\beta}e^{-\pi i(2n+1)\frac{x-t}{y}}\right)dt\\
			=&\frac{1}{\pi^{\alpha+\beta}y^2}\left(e^{-\frac{\alpha+\beta}{2}i\pi}I_1+e^{\frac{\beta-\alpha}{2}i\pi}I_2+e^{\frac{\alpha-\beta}{2}i\pi}I_3+e^{\frac{\alpha+\beta}{2}i\pi}I_4\right),
		\end{align*}
		where
		\begin{align*}
			&I_1=\int_{0}^{y}\left(\sum_{m=0}^{\infty}\left(\frac{2m+1}{y}\right)^{-\alpha}e^{\pi i(2m+1)\frac{t}{y}}\right)\left(\sum_{n=0}^{\infty}\left(\frac{2n+1}{y}\right)^{-\beta}e^{\pi i(2n+1)\frac{x-t}{y}}\right)dt,\\
			&I_2=\int_{0}^{y}\left(\sum_{m=0}^{\infty}\left(\frac{2m+1}{y}\right)^{-\alpha}e^{\pi i(2m+1)\frac{t}{y}}\right)\left(\sum_{n=0}^{\infty}\left(\frac{2n+1}{y}\right)^{-\beta}e^{-\pi i(2n+1)\frac{x-t}{y}}\right)dt,\\
			&I_3=\int_{0}^{y}\left(\sum_{m=0}^{\infty}\left(\frac{2m+1}{y}\right)^{-\alpha}e^{-\pi i(2m+1)\frac{t}{y}}\right)\left(\sum_{n=0}^{\infty}\left(\frac{2n+1}{y}\right)^{-\beta}e^{\pi i(2n+1)\frac{x-t}{y}}\right)dt,\\
			&I_4=\int_{0}^{y}\left(\sum_{m=0}^{\infty}\left(\frac{2m+1}{y}\right)^{-\alpha}e^{-\pi i(2m+1)\frac{t}{y}}\right)\left(\sum_{n=0}^{\infty}\left(\frac{2n+1}{y}\right)^{-\beta}e^{-\pi i(2n+1)\frac{x-t}{y}}\right)dt.
		\end{align*}
		From the integral formula
		$$\int_{0}^{y}e^{2\pi ik\cdot\frac{t}{y}}dt=\begin{cases}
			0, & \textrm{if}~k\neq 0,\\
			y, & \textrm{if}~k=0,
		\end{cases}$$
		we obtain
		\begin{align*}
			I_1&=\sum_{m=0}^{\infty}\sum_{n=0}^{\infty}\left(\frac{2m+1}{y}\right)^{-\alpha}\left(\frac{2n+1}{y}\right)^{-\beta}e^{\pi i(2n+1)\frac{x}{y}}\int_{0}^{y}e^{2\pi i(m-n)\frac{t}{y}}dt\\
			&=y\sum_{m=0}^{\infty}\left(\frac{2m+1}{y}\right)^{-(\alpha+\beta)}e^{\pi i(2m+1)\frac{x}{y}},\\
			I_2&=\sum_{m=0}^{\infty}\sum_{n=0}^{\infty}\left(\frac{2m+1}{y}\right)^{-\alpha}\left(\frac{2n+1}{y}\right)^{-\beta}e^{-\pi i(2n+1)\frac{x}{y}}\int_{0}^{y}e^{2\pi i(n+m+1)\frac{t}{y}}dt=0,\\
			I_3&=\sum_{m=0}^{\infty}\sum_{n=0}^{\infty}\left(\frac{2m+1}{y}\right)^{-\alpha}\left(\frac{2n+1}{y}\right)^{-\beta}e^{\pi i(2n+1)\frac{x}{y}}\int_{0}^{y}e^{-2\pi i(n+m+1)\frac{t}{y}}dt=0,\\
			I_4&=\sum_{m=0}^{\infty}\sum_{n=0}^{\infty}\left(\frac{2m+1}{y}\right)^{-\alpha}\left(\frac{2n+1}{y}\right)^{-\beta}e^{-\pi i(2n+1)\frac{x}{y}}\int_{0}^{y}e^{2\pi i(n-m)\frac{t}{y}}dt\\
			&=y\sum_{m=0}^{\infty}\left(\frac{2m+1}{y}\right)^{-(\alpha+\beta)}e^{-\pi i(2m+1)\frac{x}{y}}.
		\end{align*}
		Therefore, by (\ref{(1-6-1)}) we have
		\begin{align*}
			f(x,y)=&\frac{1}{\pi^{\alpha+\beta}y}\cdot\left(e^{-\frac{\alpha+\beta}{2}i\pi}\sum_{m=0}^{\infty}\left(\frac{2m+1}{y}\right)^{-(\alpha+\beta)}e^{\pi i(2m+1)\frac{x}{y}}\right.\\
			&\left.+e^{\frac{\alpha+\beta}{2}i\pi}\sum_{m=0}^{\infty}\left(\frac{2m+1}{y}\right)^{-(\alpha+\beta)}e^{-\pi i(2m+1)\frac{x}{y}}\right)\\
			=\,&\frac{y^{\alpha+\beta-1}}{\Gamma(\alpha+\beta)}\zeta_E\left(1-\alpha-\beta,\frac{x}{y}\right).
		\end{align*}
		Thus for $0<x\leq y,$ we obtain
		\begin{equation}\label{(4-5)}
			\begin{aligned}
				f(x,y)=\frac{y^{\alpha+\beta-1}}{\Gamma(\alpha+\beta)}\zeta_E\left(1-\alpha-\beta,\frac{x}{y}\right).
			\end{aligned}
		\end{equation}
		Subtracting Eq. (\ref{(4-5)}) from  Eq. (\ref{(4-4)}) gives
		\begin{align*}
			f(x+y,y)=\frac{y^{\alpha+\beta-1}}{\Gamma(\alpha+\beta)}\zeta_E\left(1-\alpha-\beta,
			\frac{x+y}{y}\right),
		\end{align*}
		which shows that equation Eq. (\ref{(4-5)}) also holds when $x>y$.
		Finally, replacing $x$ with $x-y$ in Eq. (\ref{(4-4)}) gives
		\begin{align*}
			f(x-y,y)=\frac{y^{\alpha+\beta-1}}{\Gamma(\alpha+\beta)}\zeta_E\left(1-\alpha-\beta,\frac{x-y}{y}\right),
		\end{align*}
		which shows that Eq. (\ref{(4-5)}) also holds when $x\leq 0$.
		In conclusion, Eq. (\ref{(4-5)}) holds for all $x\in\mathbb{R}$, completing the proof.

	\section{Examples}
	In this section, we shall provide several examples of alternating invariant functions, including the Gamma function $\tilde{\Gamma}(x)$ corresponding to the alternating Hurwitz zeta function (Example \ref{Example 5.3}), several appropriate combinations of trigonometric, exponential functions and  logarithmic functions (Examples \ref{Example 5.4}--\ref{Example 5.9}), et al.

		\begin{Example}\label{Example 5.3}
		For $x \in \mathbb{R}$ and $y > 0$, let
		\begin{align*}
			f(x, y) = 
			\begin{cases} 
				\log \left| \frac{\tilde{\Gamma}\left( \frac{x}{y}\right) }{\sqrt{y}} \right|, & \text{if } \frac{x}{y} 
				\notin \mathbb{Z}_{\leq0}, \\
				\log \left( \frac{\left(-\frac{x}{y}-1\right)!!}{\left(-\frac{x}{y}\right)!!}  \sqrt{y}\cdot \tilde{\Gamma}(2) \right), & \text{if } \frac{x}{y} \in \{0, -2, -4, \ldots\}, \\
				\log \left( \frac{\left(-\frac{x}{y}-1\right)!!}{\left(-\frac{x}{y}\right)!!} \cdot  \frac{\tilde{\Gamma}(1)}{\sqrt{y^3}} \right), & \text{if } \frac{x}{y} \in \{-1, -3, -5, \ldots\}.
			\end{cases}
		\end{align*}
		Then $f(x,y) \in \mathrm{InV}^*$.
	\end{Example}
	To see this example, we need the following lemma.
	\begin{lemma}\textup{(Recurrence relation for $\tilde{\Gamma}(x)$, see \cite[Theorem 2.11]{47})}
		For $\textup{Re}(x) > 0$ and $n \in \mathbb{N},$ we have
		\begin{align*}
			\left(\tilde{\Gamma}(x + n)\right)^{(-1)^n} = \prod_{k=0}^{n-1} \left(\frac{2(x + k)}{\pi}\right)^{(-1)^k} \tilde{\Gamma}(x).
		\end{align*}
		In concrete, if $n$ is even, then
		\begin{equation}\label{(2-3)}
			\tilde{\Gamma}(x + n) = \frac{x + n - 2}{x + n - 1} \cdot \frac{x + n - 4}{x + n - 3} \cdots \frac{x}{x + 1} \cdot \tilde{\Gamma}(x);
		\end{equation}
		if $n$ is odd, then
		\begin{equation}\label{(2-4)}
			\tilde{\Gamma}(x + n) = \frac{x + n - 2}{x + n - 1} \cdot \frac{x + n - 4}{x + n - 3} \cdots \frac{x + 1}{x + 2} \cdot \tilde{\Gamma}(x + 1).
		\end{equation}
	\end{lemma}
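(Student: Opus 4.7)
The plan is to establish the general formula by induction on $n$, with the case $n=1$ as the key base case encoding the identity
\begin{equation*}
\tilde\Gamma(x+1)\tilde\Gamma(x) = \frac{\pi}{2x},
\end{equation*}
and the inductive step following from this identity applied at the shifted argument $x+n$.

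For the base case, I would exploit the digamma difference equation. Starting from $\zeta_E(s,x+1)+\zeta_E(s,x) = x^{-s}$ specialized to $s=1$, and using $\tilde\psi(x) = -\zeta_E(1,x)$, I obtain $\tilde\psi(x+1)+\tilde\psi(x) = -1/x$. Since $\tilde\psi = (\log\tilde\Gamma)'$, integrating yields
\begin{equation*}
\tilde\Gamma(x+1)\tilde\Gamma(x) = \frac{C}{x}
\end{equation*}
for some constant $C$. To pin down $C$, I would evaluate at $x=1$, giving $C=\tilde\Gamma(1)\tilde\Gamma(2)$, and compute this product directly from the Weierstrass-type infinite product for $\tilde\Gamma$ stated in the introduction. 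Using $\tilde\gamma_0=\log 2$ and grouping the alternating factors in consecutive pairs $(2k-1,2k)$, the evaluation reduces to the classical Wallis product $\prod_{k\geq 1}\frac{(2k)^2}{(2k-1)(2k+1)}=\frac{\pi}{2}$ together with the identity $\sum_{k\geq 1}\bigl(\frac{1}{2k-1}-\frac{1}{2k}\bigr)=\log 2$. These combine to give $C=\pi/2$, as required.

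For the inductive step, suppose the displayed formula holds for $n$. Applying the base case at $x+n$ gives $\tilde\Gamma(x+n+1)\tilde\Gamma(x+n)=\pi/\bigl(2(x+n)\bigr)$, which after raising to the power $(-1)^{n+1}$ and multiplying with the inductive hypothesis produces exactly the formula for $n+1$. The explicit forms (\ref{(2-3)}) and (\ref{(2-4)}) then arise by collecting adjacent factors in the product: when $n$ is even, each pair $(k=2j,\,k=2j+1)$ contributes $(x+2j)/(x+2j+1)$, giving (\ref{(2-3)}); when $n$ is odd, the analogous pairing leaves a single unpaired factor $2(x+n-1)/\pi$, which upon applying the base case in the form $\tilde\Gamma(x)=\pi/\bigl(2x\tilde\Gamma(x+1)\bigr)$ reorganizes the expression in terms of $\tilde\Gamma(x+1)$ and produces (\ref{(2-4)}).

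The main obstacle is fixing the constant $C=\pi/2$ in the base case: the digamma argument alone only determines the recurrence up to a multiplicative constant, so the Wallis-type computation using the Weierstrass product is where $\pi$ enters the picture. Once that evaluation is performed, the induction and the bookkeeping separating the even and odd cases are essentially mechanical.
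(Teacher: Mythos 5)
The paper offers no proof of this lemma at all: it is imported verbatim from \cite[Theorem 2.11]{47}, so there is no internal argument to compare yours against. Your derivation is a correct, self-contained reconstruction. The logic is sound at every stage: the $s=1$ specialization of $\zeta_E(s,x+1)+\zeta_E(s,x)=x^{-s}$ gives $\tilde\psi(x+1)+\tilde\psi(x)=-1/x$, integration on the connected domain $\mathrm{Re}(x)>0$ yields $\tilde\Gamma(x+1)\tilde\Gamma(x)=C/x$, and the induction plus the even/odd pairing of the factors $\bigl(2(x+k)/\pi\bigr)^{(-1)^k}$ reproduces (\ref{(2-3)}) and (\ref{(2-4)}) exactly (I checked the $n=2$ and $n=3$ instances against your bookkeeping and they match). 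The one genuinely delicate step is the evaluation $C=\pi/2$: the Weierstrass factors $e^{-x/k}(1+x/k)$ give an absolutely convergent product, but once you strip off the exponentials the remaining alternating product $\prod_k(1+x/k)^{(-1)^{k+1}}$ is only conditionally convergent, so the Wallis-type evaluation must be done on consecutive pairs, as you indicate; with that caveat your computation $\tilde\Gamma(1)=\pi/2$, $\tilde\Gamma(2)=1$ is right. A marginally cleaner way to fix the constant is to let $x\to 0^+$ in $\tilde\Gamma(x+1)\tilde\Gamma(x)=C/x$: since $\tilde\Gamma(x)\sim 1/x$ from the leading factor of the Weierstrass product, this gives $C=\tilde\Gamma(1)$ directly and requires only the single Wallis evaluation. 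Either way, your argument stands, and it has the advantage of making the lemma independent of the external reference.
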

	\begin{proof}[Proof of Example \ref{Example 5.3}]
		By Eq. (\ref{(1-9)}), when $z \notin \mathbb{Z}_{\leq0}$, the following distribution formula holds:
		\begin{align}
			\tilde{\Gamma}(nz) = \frac{1}{\sqrt{n}} \prod_{r=0}^{n-1} \tilde{\Gamma}\left(z + \frac{r}{n}\right)^{(-1)^r},
		\end{align}
		where $n$ is a positive odd integer.     
		Let $x = nz$. Then we have
		\begin{align*}
			\prod_{r=0}^{n-1}\left(  \frac{\tilde{\Gamma}\left(\frac{x + r}{n}\right) }{\sqrt{ny}}\right)  ^{(-1)^r}=\frac{\tilde{\Gamma}(x)}{\sqrt{y}}.
		\end{align*}
		Thus, for $\frac{x}{y} \notin \mathbb{Z}_{\leq0}$, we have
		\begin{align*}
			\sum\limits_{r=0}^{n-1}(-1)^r\log \left| \frac{\tilde{\Gamma}\left( \frac{x+ry}{ny}\right) }{\sqrt{ny}} \right|=\log \left|
			\prod_{r=0}^{n-1} \left(\frac{\tilde{\Gamma}\left(\frac{\frac{x}{y} + r}{n}\right) }{\sqrt{ny}}\right)^{(-1)^r}\right| =
			\log \left| \frac{\tilde{\Gamma}\left( \frac{x}{y}\right) }{\sqrt{y}} \right|.
		\end{align*}
		
		On the other hand, for $m = kn + l$ ($k \in \mathbb{N}_{0}$ and $0 \leq l \leq n-1$), we have
	\begin{align*}
		&\sum\limits_{r=0}^{n-1}(-1)^rf(x+ry,ny)\\
		=&	\lim_{\frac{x}{y} \to -m}\left( \sum\limits_{\substack{r=0 \\ r \neq l}}^{n-1}(-1)^r\log \left| \frac{\tilde{\Gamma}\left( \frac{x+ry}{ny}\right) }{\sqrt{ny}} \right|+(-1)^lf(x+ly,ny)\right) \\
		=&	\lim_{\frac{x}{y} \to -m}\sum\limits_{\substack{r=0 \\ r \neq l}}^{n-1}(-1)^r\log \left| \frac{\tilde{\Gamma}\left( \frac{x+ry}{ny}\right) }{\sqrt{ny}} \right|+(-1)^lf(-kny,ny)\\
		=&		\lim_{\frac{x}{y} \to -m}\left( \log \left| \frac{\tilde{\Gamma}\left( \frac{x}{y}\right) }{\sqrt{y}} \right|-
		(-1)^l\log \left| \frac{\tilde{\Gamma}\left( \frac{x+ly}{ny}\right) }{\sqrt{ny}}\right|\right) +(-1)^lf(-kny,ny)	\\
		=&	\lim_{\frac{x}{y} \to -m}\log \left|\frac{\tilde{\Gamma}\left( \frac{x}{y}\right) \cdot\frac{1}{\sqrt{y}}}{	\left( \tilde{\Gamma}\left( \frac{\frac{x}{y}+l}{n}\right) \cdot\frac{1}{\sqrt{ny}}\right) ^{(-1)^l}}\right|+(-1)^lf(-kny,ny).
	\end{align*}
		There are two main cases.
		First, if $m$ and $k$ are both even, then $l$ is even. By Eq. (\ref{(2-3)}) we have
		\begin{align*}
			&\sum\limits_{r=0}^{n-1}(-1)^rf(x+ry,ny)\\
			=&\lim_{\frac{x}{y}  \to -m}\log \left|\frac{\tilde{\Gamma}\left( \frac{x}{y}\right) \cdot\frac{1}{\sqrt{y}}}{	\left( \tilde{\Gamma}\left( \frac{\frac{x}{y}+l}{n}\right) \cdot\frac{1}{\sqrt{ny}}\right) }\right|+	\log \left( \frac{(k-1)!!}{k!!}  \sqrt{ny}\cdot \tilde{\Gamma}(2) \right)\\
			=&\lim_{\frac{x}{y}  \to -m}\log \left|	\frac{	\frac{ \prod\limits_{\substack{r=1 \\ 2 \nmid r}}^{m+1} \left( \frac{x}{y} + r \right) }{ \prod\limits_{\substack{r=0 \\ 2 \mid r}}^{m} \left( \frac{x}{y} + r \right) }
				\cdot\tilde{\Gamma}\left( \frac{x}{y}+m+2\right) \cdot\frac{1}{\sqrt{y}}}{
				\frac{\prod\limits_{\substack{r=1 \\ 2 \nmid r}}^{k+1} \left( \frac{\frac{x}{y} + l}{n} + r \right)}{\prod\limits_{\substack{r=0 \\ 2 \mid r}}^{k} \left( \frac{\frac{x}{y} + l}{n} + r \right)}\cdot\tilde{\Gamma}\left( \frac{\frac{x}{y}+l}{n}+k+2\right) \cdot\frac{1}{\sqrt{ny}}}\right|+	\log \left( \frac{(k-1)!!}{k!!}  \sqrt{ny}\cdot \tilde{\Gamma}(2) \right) \\
			=&\log \left| \frac{\frac{(m-1)!!}{m!!}\frac{1}{\sqrt{y}}\cdot \tilde{\Gamma}(2)}{n\cdot\frac{(k-1)!!}{k!!}\frac{1}{\sqrt{ny}}\cdot \tilde{\Gamma}(2)}
			\right|+	\log \left( \frac{(k-1)!!}{k!!}  \sqrt{ny}\cdot \tilde{\Gamma}(2) \right)\\		
			=&
			\log \left| \frac{\frac{(m-1)!!}{m!!}\sqrt{y}\cdot \tilde{\Gamma}(2)}{\frac{(k-1)!!}{k!!}\sqrt{ny}\cdot \tilde{\Gamma}(2)}
			\right|+	\log \left( \frac{(k-1)!!}{k!!}  \sqrt{ny}\cdot \tilde{\Gamma}(2) \right)\\
			=&\log \left( \frac{(m-1)!!}{m!!}  \sqrt{y}\cdot \tilde{\Gamma}(2) \right)\\
			=&f(x,y),
		\end{align*}
		here we have used 
		\begin{align*}
		\lim_{\frac{x}{y}  \to -m}\left|\frac{\frac{\frac xy +l}{n}+k}{\frac xy +m}\right|=\frac1n.	
		\end{align*}

 For the case where at least one of \(m\) and \(k\) is odd, this example can be verified using a similar argument.
									\end{proof}
	
	\begin{Example}\label{Example 5.4}
		For $y>0,$ define
		\begin{align*}
			f(x, y) = 
			\begin{cases} 
				\log \left| \tan \frac{\pi x}{2y} \right|, & \text{if } \frac{x}{y} \notin \mathbb{Z}, \\
				-(-1)^{\frac{x}{y}} \log y, & \text{if } \frac{x}{y} \in \mathbb{Z}.
			\end{cases}
		\end{align*}
		Then $f(x,y)\in \mathrm{InV}^*$.
	\end{Example}
	
	\begin{proof}
When \(\frac{x}{y} \notin \mathbb{Z}\), we obtain
\begin{align*}
\log \left| \tan \frac{\pi x}{2y} \right| = \frac{-2}{y} \sum_{m=0}^{\infty}
\frac{\cos\left( (2m+1) \cdot \frac{\pi x}{y} \right)}{\frac{2m+1}{y}}.
\end{align*}
Since \(\frac{x}{y} \notin \mathbb{Z}\), for any \(r \in \{0,1,\ldots,n-1\}\) 
we have \(\frac{x+ry}{ny} \notin \mathbb{Z}\). Thus by Proposition \ref{Pro 3.7}, we get
\begin{align*}
\sum_{r=0}^{n-1} (-1)^r f(x+ry,ny) = f(x,y)
\end{align*}
for odd \(n\) and \(y > 0\).
		
If $\frac{x}{y}\in \mathbb{Z}$, there exists a unique $s\in \{0,1,\ldots,n-1\}$ such that
$\frac{x+sy}{ny}\in \mathbb{Z}$. Indeed, assuming $\frac{x}{y} = kn+t$ where $1 \le t \le n$, 
we find that $s=n-t$. Thus
\begin{align*}
\sum_{\substack{r=0 \\ r \neq s}}^{n-1}(-1)^r\log \left| \tan\pi\cdot \frac{x+ry}{2ny} \right|
&= \sum_{r=0}^{s-1}(-1)^r\log \left| \tan\left( \pi \cdot\frac{\frac{x}{y}+r}{2n}\right) \right| \\
&\quad + \sum_{r=s+1}^{n-1}(-1)^r\log \left| \tan\left( \pi \cdot\frac{\frac{x}{y}+r}{2n}\right) \right|.
\end{align*}
Using the identity $\tan\left(x+\frac{k\pi}{2}\right) = (-1)^k \tan(x)^{(-1)^k}$, this becomes 
\begin{align*}
\sum_{\substack{r=0 \\ r \neq s}}^{n-1}(-1)^r\log \left| \tan\left(\pi\cdot \frac{x+ry}{2ny}\right) 
\right| 
&=\sum_{r=0}^{s-1}(-1)^{r+k}\log \left| \tan\left(\pi \cdot\frac{t+r}{2n}\right) \right| \\
&\quad + \sum_{r=s+1}^{n-1}(-1)^{r+k+1}\log \left| \tan\left(\pi \cdot\frac{t+r-n}{2n}\right) \right|.
\end{align*}
Since $n$ is odd, we have
\begin{align*}
(-1)^r(-1)^k=(-1)^{r+t}(-1)^{kn+t} \text{ and } (-1)^r(-1)^{k+1}=(-1)^{r+t-n}(-1)^{kn+t}.
\end{align*}
Through variable substitution, we obtain
\begin{align*}
&\quad\sum_{\substack{r=0 \\ r \neq s}}^{n-1}(-1)^r\log \left| \tan\left( \pi\cdot \frac{x+ry}{2ny}\right) \right| \\
&= \sum_{r=0}^{s-1}(-1)^{r+t}(-1)^{kn+t}\log \left| \tan\frac{\pi (t+r)}{2n} \right| \\
&\quad + \sum_{r=s+1}^{n-1}(-1)^{r+t-n}(-1)^{kn+t}\log \left| \tan \frac{\pi(t+r-n)}{2n} \right| \\
&= (-1)^{\frac{x}{y}}\cdot\left( \sum_{r=t}^{n-1}(-1)^r\log \left| \tan \frac{\pi r}{2n} \right| + \sum_{r=1}^{t-1}(-1)^r\log \left| \tan\frac{\pi r}{2n} \right|\right) \\
%&= (-1)^{\frac{x}{y}}\cdot\left(\sum_{r=1}^{n-1}(-1)^r\log \left| \tan\frac{\pi r}{2n} %\right|\right) \\
&= (-1)^{\frac{x}{y}}\cdot \log \prod_{r=1}^{n-1}\left| \tan \frac{\pi r}{2n} \right|^{(-1)^r}.
\end{align*}
Evaluating the product:
\begin{align*}
\prod\limits_{r=1}^{n-1}\left( \tan \frac{\pi r}{2n} \right)^{(-1)^r}
%=&\prod\limits_{r=1}^{n-1}\left( \frac{1-e^{\frac{\pi ri}{n}}}{1+e^{\frac{\pi ri}{n}}} \right)^{(-1)^r}\cdot\prod\limits_{r=1}^{n-1}\left( \frac{-1}{i}\right)^{(-1)^r}\\
=&\prod\limits_{r=1}^{n-1}\left( \frac{1-e^{\frac{\pi ri}{n}}}{1+e^{\frac{\pi ri}{n}}} \right)^{(-1)^r}\\
=&\frac{\prod\limits_{\substack{r=1 \\ 2 \mid r}}^{n-1}\left( 1-e^{\frac{\pi ri}{n}}\right)\cdot \prod\limits_{\substack{r=1 \\ 2 \nmid r}}^{n-1}\left( 1+e^{\frac{\pi ri}{n}}\right) }{\prod\limits_{\substack{r=1 \\ 2 \nmid r}}^{n-1}\left( 1-e^{\frac{\pi ri}{n}}\right)\cdot \prod\limits_{\substack{r=1 \\ 2 \mid r}}^{n-1}\left( 1+e^{\frac{\pi ri}{n}}\right) }\\
=&\frac{\prod\limits_{\substack{r=1 \\ 2 \mid r}}^{n-1}\left( 1-e^{\frac{\pi ri}{n}}\right)\cdot\prod\limits_{\substack{s=n+1 \\ 2 \mid s}}^{2n-1}\left( 1-e^{\frac{\pi si}{n}}\right)
			}{\prod\limits_{\substack{r=1 \\ 2 \nmid r}}^{n-1}\left( 1-e^{\frac{\pi ri}{n}}\right)\cdot\prod\limits_{\substack{s=n+1 \\ 2 \nmid s}}^{2n-1}\left( 1-e^{\frac{\pi si}{n}}\right) }\\
			&\quad(\text{by letting}~s=r+n)\\       
						=&\frac{\prod\limits_{r=1}^{n-1}\left( 1-e^{\frac{2\pi ri}{n}}\right)}{\prod\limits_{\substack{r=1 \\ r \neq \frac{n+1}{2}}}^{n}\left( 1-e^{\frac{(2r-1)\pi i}{n}}\right)}\\
=&\lim_{x  \to 1}\frac{\frac{x^n-1}{x-1}}{\frac{x^n+1}{x+1}}=n,
\end{align*}
we get
\begin{align*}
\sum\limits_{r=0}^{n-1}(-1)^rf(x+ry,ny)
&=\sum_{\substack{r=0 \\ r \neq s}}^{n-1}(-1)^r\log \left| \tan\left(\pi\cdot \frac{x+ry}{2ny}\right) \right| +(-1)^{\frac{x}{y}}\log n\\
&= -(-1)^s(-1)^{\frac{x+sy}{ny}}\log ny + (-1)^{\frac{x}{y}}\log n.
\end{align*}
Since $n$ is odd, this simplifies to
$$\sum\limits_{r=0}^{n-1}(-1)^rf(x+ry,ny)
 = -(-1)^{\frac{x}{y}}\log ny + (-1)^{\frac{x}{y}}\log n
 =f(x,y).$$
In conclusion, we have $f(x,y)\in \mathrm{InV}^*$.
\end{proof}

	\begin{Example}\label{Example 5.5}
		For $y > 0$, let
		\begin{align*}
			f(x, y) = 
			\begin{cases}
				\frac{1}{y} \csc \pi \frac{x}{y}, & \text{if } \frac{x}{y} 
				\notin \mathbb{Z}, \\
				0, & \text{if } \frac{x}{y} \in \mathbb{Z}.
			\end{cases}
		\end{align*}
		Then $f(x, y) \in \mathrm{InV}^*$.
	\end{Example}
	\begin{proof}
		Since $\frac{\partial}{\partial x}\log|\tan\frac{\pi x}{2y}|=\frac{\pi}{y}\csc\frac{\pi x}{y}$, Example \ref{Example 5.5} follows from Example  \ref{Example 5.4} and Proposition \ref{Pro 3.2} immediately.
	\end{proof}

	\begin{Example}\label{Example 5.6}
		For $a \in \mathbb{R}$, let
		\begin{align*}
			f(x, y) = 
			\begin{cases}
				(-1)^{\frac{a - x}{y}}, &\textrm{if}~ \frac{a - x}{y} \in \mathbb{Z}, \\
				0, & \textrm{if}~\frac{a - x}{y} 
				\notin \mathbb{Z}.
			\end{cases}
		\end{align*}
		Then $f(x, y) \in \mathrm{InV}^*$.
	\end{Example}
	
	\begin{proof}
		If $\frac{a - x}{y} \in \mathbb{Z}$, then there exists a unique $s \in \{0, 1, \ldots, n-1\}$ such that $\frac{a - x}{y} \equiv s \pmod{n}$. Assume $\frac{a - x}{y} = kn + s$ for some $k\in\mathbb{N}_{0}$. 
		Since $n$ is odd, we have
		\begin{align*}
			\sum_{r=0}^{n-1} (-1)^r f(x + ry, ny) = (-1)^s (-1)^{\frac{a - x - sy}{ny}} = (-1)^s (-1)^k.
		\end{align*}
		Also due to $n$ being odd, $(-1)^s (-1)^k = (-1)^s (-1)^{kn} = (-1)^{kn + s}$. Thus,
		\begin{align*}
			\sum_{r=0}^{n-1} (-1)^r f(x + ry, ny) = (-1)^{kn + s} = (-1)^{\frac{a - x}{y}} = f(x, y).
		\end{align*}
		If $\frac{a - x}{y} \notin \mathbb{Z}$, then for any $r \notin \{0, 1, \ldots, n-1\}$, we have $\frac{a - x - ry}{ny} \notin \mathbb{Z}$. In this case,
		\begin{align*}
			\sum_{r=0}^{n-1} (-1)^r f(x + ry, ny) = 0 = f(x, y).
		\end{align*}
		Therefore, $f(x, y) \in \mathrm{InV}^*$.
	\end{proof}

	\begin{Example}\label{Example 5.7}
		For $a > 0$ and $a \neq 1$, we have $\frac{a^x}{a^y + 1} \in \mathrm{InV}^*$.
	\end{Example}
	\begin{proof}
		From the following equation
		\begin{align*}
			\sum_{r=0}^{n-1} (-1)^r \frac{a^{x + ry}}{a^{ny} + 1} = \frac{a^x}{a^{ny} + 1} \sum_{r=0}^{n-1} (-1)^r (-a^y)^r = \frac{a^x}{a^{ny} + 1} \cdot \frac{1 - (-a^y)^n}{1 + a^y} = \frac{a^x}{a^y + 1},
		\end{align*}
		the conclusion holds.
	\end{proof}
	
	\begin{Example}\label{Example 5.8}
		For $r > 0$ and $r \neq 1$, let
		\begin{align*}
			f(x, y) = \frac{r^{x + y} \cos(x - y)\theta + r^x \cos x\theta}{1 + 2r^y \cos y\theta + r^{2y}}, \quad g(x, y) = \frac{r^{x + y} \sin(x - y)\theta + r^x \sin x\theta}{1 + 2r^y \cos y\theta + r^{2y}}.
		\end{align*}
		Then we have $f(x, y)$ and $g(x, y) \in \mathrm{InV}^*$.
	\end{Example}
	
	\begin{proof}
		By Euler's identity $e^{i\theta} = \cos\theta + i\sin\theta$, we have
		\begin{align*}
			\frac{(re^{i\theta})^x}{(re^{i\theta})^y + 1} = \frac{r^x (\cos x\theta + i\sin x\theta)}{r^y \cos y\theta + 1 + ir^y \sin y\theta}.
		\end{align*}
		In the right-hand side, multiplying the numerator and denominator by the conjugate of the denominator, we get
		\begin{align*}
			\frac{(re^{i\theta})^x}{(re^{i\theta})^y + 1} = \frac{r^x (\cos x\theta + i\sin x\theta)(r^y \cos y\theta + 1 - ir^y \sin y\theta)}{(r^y \cos y\theta + 1)^2 + (r^y \sin y\theta)^2}.
		\end{align*}
		Then expanding the numerator and denominator, we have
		\begin{align*}
			\frac{(re^{i\theta})^x}{(re^{i\theta})^y + 1} = \frac{r^{x + y} \cos(x - y)\theta + r^x \cos x\theta}{1 + 2r^y \cos y\theta + r^{2y}} + i \cdot \frac{r^{x + y} \sin(x - y)\theta + r^x \sin x\theta}{1 + 2r^y \cos y\theta + r^{2y}}.
		\end{align*}
		Since
		\begin{align*}
			f(x, y) + ig(x, y) = \frac{(re^{i\theta})^x}{(re^{i\theta})^y + 1},
		\end{align*}
		by Proposition \ref{Pro 3.3} and Example \ref{Example 5.7} we conclude that $f(x, y)$ and $g(x, y) \in \mathrm{InV}^*$.
	\end{proof}
	
	\begin{Example}\label{Example 5.9}
		For $0 < r < 1$, let
		\begin{align*}
			f(x, y) &= \frac{\cos\left(\pi\frac{x}{y}\right)\left(1 - r^{\frac{2}{y}}\right)r^{\frac{1}{y}}}{y\left(1 - 2r^{\frac{2}{y}}\cos\left(2\pi\frac{x}{y}\right) + r^{\frac{4}{y}}\right)}, \\
			g(x, y) &= \frac{\sin\left(\pi\frac{x}{y}\right)\left(1 + r^{\frac{2}{y}}\right)r^{\frac{1}{y}}}{y\left(1 - 2r^{\frac{2}{y}}\cos\left(2\pi\frac{x}{y}\right) + r^{\frac{4}{y}}\right)}.
		\end{align*}
		Then we have $f(x, y)$ and $g(x, y) \in \mathrm{InV}^*$.
	\end{Example}
	
	\begin{proof}
		For $0 < x < 1$, we have
		\begin{align*}
			\frac{x}{1 - x^2} = \sum_{m=0}^{\infty} x^{2m + 1}.
		\end{align*}
		Thus,
		\begin{align*}
			F(x, y) &= \frac{r^{\frac{1}{y}} e^{\pi i \frac{x}{y}}}{y \left(1 - r^{\frac{2}{y}} e^{2\pi i \frac{x}{y}}\right)} \\
			&= \frac{1}{y} \sum_{m=0}^{\infty} r^{\frac{2m + 1}{y}} e^{(2m + 1)\pi i \frac{x}{y}}.
		\end{align*}
		From the proof of Proposition \ref{Pro 3.7}, we know that $F(x, y) \in \mathrm{InV}^*$.
		
		In addition, by Euler's identity $e^{i\theta} = \cos\theta + i\sin\theta$, we have
		\begin{align*}
			F(x, y) &= \frac{r^{\frac{1}{y}} \left(\cos\left(\pi\frac{x}{y}\right) + i\sin\left(\pi\frac{x}{y}\right)\right)}{y \left(1 - r^{\frac{2}{y}} \cos\left(2\pi\frac{x}{y}\right) - i r^{\frac{2}{y}} \sin\left(2\pi\frac{x}{y}\right)\right)} \\
			&= \frac{r^{\frac{1}{y}} \left(\cos\left(\pi\frac{x}{y}\right) + i\sin\left(\pi\frac{x}{y}\right)\right) \left(1 - r^{\frac{2}{y}} \cos\left(2\pi\frac{x}{y}\right) + i r^{\frac{2}{y}} \sin\left(2\pi\frac{x}{y}\right)\right)}{y \left(1 - 2r^{\frac{2}{y}} \cos\left(2\pi\frac{x}{y}\right) + r^{\frac{4}{y}}\right)} \\
			&= \frac{\cos\left(\pi\frac{x}{y}\right) \left(1 - r^{\frac{2}{y}}\right) r^{\frac{1}{y}}}{y \left(1 - 2r^{\frac{2}{y}} \cos\left(2\pi\frac{x}{y}\right) + r^{\frac{4}{y}}\right)} + i \cdot \frac{\sin\left(\pi\frac{x}{y}\right) \left(1 + r^{\frac{2}{y}}\right) r^{\frac{1}{y}}}{y \left(1 - 2r^{\frac{2}{y}} \cos\left(2\pi\frac{x}{y}\right) + r^{\frac{4}{y}}\right)} \\
			&= f(x, y) + i g(x, y).
		\end{align*}
		Therefore, by Proposition \ref{Pro 3.3}, we have $f(x, y)$ and $g(x, y) \in \mathrm{InV}^*$.
	\end{proof}
	
\section*{Acknowledgements} 
 The authors are enormously grateful to the anonymous referee for his/her careful
reading of this paper, and for his/her  valuable comments and suggestions. 

Su Hu is supported by the Natural Science Foundation of Guangdong Province, China (No. 2024A1515012337).  Min-Soo Kim is supported by the National Research Foundation of Korea(NRF) grant funded by the Korea government(MSIT) (No. NRF-2022R1F1A1065551). 

\section*{Statements and Declarations}
\subsection*{Declaration of competing interest}
The authors declare that they have no known competing financial interests or personal relationships that could have appeared to influence the work reported in this paper.

Authors declare that they do not have any conflict of interest.

\subsection*{Data availability}
No data was used for the research described in the article.

	\bibliography{central}

\end{document}